\newcommand{\leftsub}[2]{{\vphantom{#2}}_{#1}{#2}}
\newcommand{\Hom}{\operatorname{Hom}}
\newcommand{\End}{\operatorname{End}}
\newcommand{\Ext}{\operatorname{Ext}}
\newcommand{\Tor}{\operatorname{Tor}}
\newcommand{\rep}{\operatorname{rep}}
\newcommand{\Proj}{\operatorname{Proj}}
\newcommand{\SI}{\operatorname{SI}}
\newcommand{\GL}{\operatorname{GL}}
\newcommand{\PGL}{\operatorname{PGL}}
\newcommand{\ZZ}{\mathbb Z}
\newcommand{\coker}{\operatorname{coker}}
\newcommand{\Id}{\operatorname{Id}}
\newcommand{\Mat}{\operatorname{Mat}}
\newcommand{\filt}{\operatorname{filt}}
\newcommand{\ddim}{\operatorname{\mathbf{dim}}}
\newcommand{\dd}{\operatorname{\mathbf{d}}}
\newcommand{\ee}{\operatorname{\mathbf{e}}}
\newcommand{\M}{\operatorname{\mathcal{M}}}
\newcommand{\module}{\operatorname{mod}}
\newcommand{\ind}{\operatorname{ind}}
\newcommand{\qq}{\operatorname{Quot}}
\newtheorem{theorem}{Theorem}[section]
\newtheorem{proposition}[theorem]{Proposition}
\newtheorem{lemma}[theorem]{Lemma}
\theoremstyle{definition}
\newtheorem{definition}[theorem]{Definition}
\newtheorem{example}[theorem]{Example}
\newtheorem{rmk}{Remark}
\newenvironment{remark}[1][]{\begin{rmk}[#1]\pushQED{\qed}}{\popQED \end{rmk}}
\gdef\Young(#1){\hbox{$\vcenter
{\mathcode`,="8000\mathcode`|="8000
\def,{\global\advance\cols by 1 &}%
\def|{\cr
      \multispan{\the\cols}\hrulefill\cr
       &\global\cols=2 }%
  \offinterlineskip\everycr{}\tabskip=0pt
  \dimen0=\ht\strutbox \advance\dimen0 by \dp\strutbox
    \halign
    {\vrule height \ht\strutbox depth \dp\strutbox##
      &&\hbox to \dimen0{\hss$##$\hss}\vrule\cr
     \noalign{\hrule}&\global\cols=2 #1\crcr
     \multispan{\the\cols}\hrulefill\cr%
   }
}$}} }
\begin{document}

\pagestyle{plain}

\mbox{}
\title{On the invariant theory for tame tilted algebras}
\author{Calin Chindris}

\address{University of Missouri, Department of Mathematics, Columbia, MO 65211, USA}
\email[Calin Chindris]{chindrisc@missouri.edu}

%\markboth{b}{b}
\date{\today}

\bibliographystyle{plain}
\subjclass[2000]{16G10, 16G20, 16G60, 16R30 }
\keywords{Exceptional sequences, moduli spaces, rational invariants, tame and wild algebras, tilting}
\maketitle

\begin{abstract} We show that a tilted algebra $A$ is tame if and only if for each generic root $\dd$ of $A$ and each indecomposable irreducible component $C$ of $\module(A,\dd)$, the field of rational invariants $k(C)^{\GL(\dd)}$ is isomorphic to $k$ or $k(x)$. Next, we show that the tame tilted algebras are precisely those tilted algebras $A$ with the property that for each generic root $\dd$ of $A$ and each indecomposable irreducible component $C \subseteq \module(A,\dd)$, the moduli space $\M(C)^{ss}_{\theta}$ is either a point or just $\mathbb P^1$ whenever $\theta$ is an integral weight for which $C^s_{\theta}\neq \emptyset$. We furthermore show that the tameness of a tilted algebra is equivalent to the moduli space $\M(C)^{ss}_{\theta}$ being smooth for each generic root $\dd$ of $A$, each indecomposable irreducible component $C \subseteq \module(A,\dd)$, and each integral weight $\theta$ for which $C^s_{\theta} \neq \emptyset$. As a consequence of this latter description, we show that the smoothness of the various moduli spaces of modules for a strongly simply connected algebra $A$ implies the tameness of $A$.

Along the way, we explain how moduli spaces of modules for finite-dimensional algebras behave with respect to tilting functors, and to theta-stable decompositions.
\end{abstract}

\vspace{10pt}

\begin{list}{\arabic{enumi}.}
           {\leftmargin=5ex \rightmargin=2ex \usecounter{enumi}
	    \itemsep=-.35mm \topsep=-1.5mm}
\item
{\sf Introduction}
	\hfill\pageref{intro-sec}
\item
{\sf Background on module varieties}
	\hfill\pageref{Module-varieties-sec}

\item
{\sf Moduli spaces of modules}          
         \hfill\pageref{Moduli-spaces-sec}

\item
{\sf Tilted algebras}
          \hfill\pageref{Tilted-algebras-sec}

\item
{\sf References}
	\hfill\pageref{biblio-sec}
\end{list} \vspace{1.5mm}

\section{Introduction}\label{intro-sec}

Throughout this paper, we work over an algebraically closed field $k$ of characteristic zero. All algebras (associative and with identity) are assumed to be finite-dimensional over $k$, and all modules are assumed to be finite-dimensional left modules.

One of the fundamental problems in the representation theory of algebras is that of classifying the indecomposable modules. Based on the complexity of the indecomposable modules, one distinguishes the class of tame algebras and that of wild algebras. According to the remarkable Tame-Wild Dichotomy Theorem of Drozd \cite{Dro}, these two classes of algebras are disjoint and they cover the whole class of algebras. Since the representation theory of  a wild algebra is at least as complicated as that of a free algebra in two variables, and since the later theory is known to be undecidable, one can hope to meaningfully classify the indecomposable modules only for tame algebras. For more precise definitions, see \cite[Chapter XIX]{Sim-Sko-3} and the reference therein.

In \cite{CC9}, the author has found a description of the tameness of path algebras and of canonical algebras in terms of the invariant theory of the algebras in question (see also \cite{Domo2}). In this paper, we continue this line of inquiry for the class of tilted algebras. 

\begin{theorem} \label{quasi-tilted-rationalinv-thm} Let $A$ be a tilted algebra. The following conditions are equivalent:
\begin{enumerate}
\renewcommand{\theenumi}{\arabic{enumi}}
\item $A$ is tame;
\item for each generic root $\dd$ of $A$ and each indecomposable irreducible component $C$ of $\module(A,\dd)$, $k(C)^{\GL(\dd)} \simeq k$ or $k(x)$.
\item for each generic root $\dd$ of $A$ and each indecomposable irreducible component $C \subseteq \module(A,\dd)$, the moduli space $\M(C)^{ss}_{\theta}$ is either a point or $\mathbb P^1$ whenever $\theta$ is an integral weight of $A$ for which $C^s_{\theta}\neq \emptyset$;
\item for each generic root $\dd$ of $A$ and each indecomposable irreducible component $C \subseteq \module(A,\dd)$, the moduli space $\M(C)^{ss}_{\theta}$ is smooth whenever $\theta$ is an integral weight of $A$ for which $C^s_{\theta}\neq \emptyset$.
\end{enumerate}
\end{theorem}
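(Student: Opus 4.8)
The plan is to reduce everything, one indecomposable component at a time, to the case of a hereditary algebra — where the rational‑invariant description is already available from \cite{CC9} — and then to run the short cycle $(1)\Rightarrow(2)\Rightarrow(3)\Rightarrow(4)\Rightarrow(1)$, of which $(3)\Rightarrow(4)$ is trivial. Write $A=\End_H(T)$ with $H$ finite-dimensional hereditary and $T$ a tilting $H$-module, let $(\mathcal F(T),\mathcal T(T))$ and $(\mathcal X(T),\mathcal Y(T))$ be the torsion pairs induced by $T$ in $\module(H)$ and $\module(A)$, and recall the Brenner--Butler equivalences $\Hom_H(T,-)\colon\mathcal T(T)\xrightarrow{\ \sim\ }\mathcal Y(T)$, $-\otimes_AT\colon\mathcal Y(T)\xrightarrow{\ \sim\ }\mathcal T(T)$ and $\Ext^1_H(T,-)\colon\mathcal F(T)\xrightarrow{\ \sim\ }\mathcal X(T)$, $\Tor^A_1(-,T)\colon\mathcal X(T)\xrightarrow{\ \sim\ }\mathcal F(T)$. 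The key structural input is that, $H$ being hereditary, the torsion pair $(\mathcal X(T),\mathcal Y(T))$ is \emph{split}, so every indecomposable $A$-module lies entirely in $\mathcal Y(T)$ or entirely in $\mathcal X(T)$; I shall also use the classical fact that $A=\End_H(T)$ is tame if and only if $H$ is tame.

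The technical heart is a transfer principle. Given a generic root $\dd$ of $A$ and an indecomposable irreducible component $C\subseteq\module(A,\dd)$, a general module of $C$ is indecomposable, hence lies entirely in $\mathcal Y(T)$ or entirely in $\mathcal X(T)$; in the first case (the second being dual, over $H^{\mathrm{op}}$) I would show that $\Hom_H(T,-)$, which is exact and $\GL$-equivariant on $\mathcal T(T)$, induces a $\GL(\dd)$--$\GL(\dd')$-equivariant birational correspondence between $C$ and an indecomposable irreducible component $C'\subseteq\module(H,\dd')$ for the linear image $\dd'$ of $\dd$, under which an integral weight $\theta$ of $A$ with $C^s_\theta\neq\emptyset$ matches an integral weight $\theta'$ of $H$ with $(C')^s_{\theta'}\neq\emptyset$, the GIT quotients satisfy $\M(C)^{ss}_\theta\cong\M(C')^{ss}_{\theta'}$, one has $k(C)^{\GL(\dd)}\cong k(C')^{\GL(\dd')}$, and $\theta$-stable decompositions on the two sides correspond; conversely every indecomposable component over $H$ arises this way, again because the torsion pairs split. \textbf{I expect this transfer to be the main obstacle}: turning an equivalence of subcategories into a genuine isomorphism of GIT quotients forces one to identify the rings of $\GL$-semi-invariants on the two (distinct) module varieties, and one must check that King-stability and the combinatorics of $\theta$-stable decompositions survive the passage — the latter also being what makes the transfer available for the component-wise computation of $k(C)^{\GL(\dd)}$ when $C$ fails to be generically $\theta$-stable.

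Granting the transfer, $(1)\Rightarrow(2)$ is immediate: if $A$ is tame then $H$ is tame, so by \cite{CC9} every field of rational invariants attached to an indecomposable component of $\module(H,-)$ is isomorphic to $k$ or $k(x)$, and the transfer carries this back to $A$. For $(2)\Rightarrow(3)$, fix $C$, $\dd$, $\theta$ with $C^s_\theta\neq\emptyset$ and transfer to $C'\subseteq\module(H,\dd')$; since $H$ is hereditary, $C'$ is the full module variety $\module(H,\dd')$, an affine space, hence smooth, so $\M(C)^{ss}_\theta\cong\M(C')^{ss}_{\theta'}$ is a \emph{normal} projective variety (a GIT quotient of a smooth variety by a reductive group, proper because the acyclic quiver of $H$ carries no nonconstant $\GL$-invariants). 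As $C^s_\theta$ is dense open in $C$ and the quotient map is there a principal $\PGL(\dd)$-bundle, the function field of this variety is $k(C)^{\GL(\dd)}$, which by $(2)$ is $k$ or $k(x)$; but a normal projective variety with function field $k$ is a point, and a normal — hence smooth — projective rational curve is $\mathbb P^1$, giving $(3)$. Then $(3)\Rightarrow(4)$ is trivial.

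For $(4)\Rightarrow(1)$ I argue by contraposition. If $A$ is wild then $H$ is wild, and a wild hereditary algebra admits a generic root $\dd$ (an imaginary Schur root, or a suitable multiple of one) together with an integral weight $\theta$ such that $C^s_\theta\neq\emptyset$ and $\M(C)^{ss}_\theta$ contains a properly $\theta$-semistable polystable point $M_1\oplus\cdots\oplus M_r$ with $r\ge 3$ pairwise non-isomorphic $\theta$-stable bricks $M_i$; Luna's slice theorem then identifies $\M(C)^{ss}_\theta$ étale-locally near that point with an affine quotient of $\Ext^1_H(\bigoplus_i M_i,\bigoplus_i M_i)$ by $\operatorname{Aut}(\bigoplus_i M_i)$, which is a nontrivial affine toric variety (the spectrum of the semigroup ring of a non-free lattice semigroup) and is in particular \emph{singular}. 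Transferring this singular moduli space back to $A$ contradicts $(4)$. This closes the cycle and proves the theorem.
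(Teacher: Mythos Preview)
Your transfer principle is indeed the crux, and you are right to flag it as the main obstacle---but the obstacle is more serious than you allow, and your proposal does not overcome it. An equivalence between $\mathcal Y(T)$ and $\mathcal T(T)$ does \emph{not} automatically match $\theta$-(semi)stability with $\theta'$-(semi)stability for an arbitrary weight: if $N\in\mathcal Y(T)$ is $\theta$-stable and $M=T\otimes_A N\in\mathcal T(T)$, a submodule $M'\leq M$ need not lie in $\mathcal T(T)$, and its torsion-free part $M'_f\in\mathcal F(T)$ contributes $\theta'(\ddim M'_f)$ to $\theta'(\ddim M')$ with no a~priori sign control. The paper's Theorem~\ref{moduli-tilting-thm} makes exactly this computation go through, but only under the hypothesis that $\theta$ is \emph{well-positioned with respect to $T$}: either $\theta(\ddim L)<0$ for every nonzero $L\in\mathcal F(T)$, or the dual condition. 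The splitness of the torsion pair on the tilted side does tell you that every indecomposable of a fixed dimension vector lies entirely in $\mathcal Y(T)$ or entirely in $\mathcal X(T)$, but it does \emph{not} force an arbitrary $\theta$ with $C^s_\theta\neq\emptyset$ to be well-positioned: there may perfectly well be $\theta$-stable modules in $\mathcal Y(T)$ of one dimension and in $\mathcal X(T)$ of another. So your claimed isomorphism $\M(C)^{ss}_\theta\cong\M(C')^{ss}_{\theta'}$ in $(2)\Rightarrow(3)$ is unjustified, and with it the normality of $\M(C)^{ss}_\theta$---without which a rational projective curve need not be $\mathbb P^1$.

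The paper avoids this by \emph{not} transferring to $H$ for $(1)\Rightarrow(2)\Rightarrow(3)$ at all. It works intrinsically on $A$ via the Bobi\'nski--Skowro\'nski structure theory (Theorem~\ref{BS-tame-quasi-tilted-thm}): every generic root is Schur with $q_A(\dd)\in\{0,1\}$; the real case is an orbit closure; in the isotropic case the support is tame concealed or tubular, $\module(A,\dd)$ is \emph{normal}, and a short orthogonal exceptional sequence with $\dim_k\Ext^1=2$ (Proposition~\ref{exceptional-seq-qtilted-prop}) gives $k(C)^{\GL(\dd)}\cong k(x)$ via Proposition~\ref{rational-inv-quiverel-prop}. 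Normality of $C$---obtained directly, not by transfer---is what makes $\M(C)^{ss}_\theta$ a normal rational projective curve, hence $\mathbb P^1$. Tilting enters only in $(4)\Rightarrow(1)$, and there the paper does precisely the delicate step you omit: it builds a \emph{specific} weight $\theta_0=\langle\dd_0,\cdot\rangle-\langle\cdot,\dd_0\rangle$ attached to a sincere regular Schur module all of whose $\tau$-translates stay sincere, and \emph{verifies} that $\theta_0$ is well-positioned with respect to the preprojective tilting module before invoking Theorem~\ref{moduli-tilting-thm} and the known singularity of $\M(kQ,3\dd_0)^{ss}_{\theta_0}$ from \cite{Domo2}. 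Your Luna-slice construction of a singular point is reasonable in spirit, but you would still owe this well-positionedness check to carry the singularity back to $A$.
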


As a consequence of Theorem \ref{quasi-tilted-rationalinv-thm} and Br{\"u}stle-de la Pe{\~n}a-Skowro{\'n}ski's tameness criterion from \cite[Corollary 1]{BruPenSko}, we derive the following sufficient geometric criterion for the tameness of a strongly simply connected algebra:

\begin{proposition}\label{strongly-simply-connected-prop} Let $A$ be a strongly simply connected algebra. Assume that for each generic root $\dd$ of $A$, each indecomposable irreducible component $C \subseteq \module(A,\dd)$, and each integral weight $\theta$ for which $C^{s}_{\theta} \neq \emptyset$, $\M(C)^{ss}_{\theta}$ is a smooth variety. Then, $A$ is a tame algebra.
\end{proposition}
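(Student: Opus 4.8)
The plan is to establish the contrapositive: assuming that $A$ is strongly simply connected and \emph{not} tame, I will produce a generic root $\dd$ of $A$, an indecomposable irreducible component $C\subseteq\module(A,\dd)$, and an integral weight $\theta$ with $C^{s}_{\theta}\neq\emptyset$ for which $\M(C)^{ss}_{\theta}$ fails to be smooth, in contradiction with the hypothesis. First I would invoke the tameness criterion of Br\"ustle--de la Pe\~na--Skowro\'nski \cite[Corollary~1]{BruPenSko}: since $A$ is strongly simply connected and wild, its Tits form $q_A$ is not weakly non-negative, and therefore, by the classification of the minimal bound quiver algebras with this property, $A$ contains a convex subcategory $B$ that is \emph{hypercritical}, i.e.\ one of the finitely many, explicitly classified minimal algebras whose Tits form is not weakly non-negative.

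The next step is to note that every hypercritical algebra is a tilted algebra of a hereditary algebra of wild representation type; this is visible from the explicit list of hypercritical algebras (see \cite{BruPenSko} and the references therein), each member of which is (concealed, hence) tilted. In particular $B$ is a wild tilted algebra, so Theorem~\ref{quasi-tilted-rationalinv-thm} applies to $B$. Since $B$ is not tame, condition~(4) of that theorem must fail for $B$: there are a generic root $\dd$ of $B$, an indecomposable irreducible component $C\subseteq\module(B,\dd)$, and an integral weight $\theta$ of $B$ with $C^{s}_{\theta}\neq\emptyset$ such that the moduli space $\M(C)^{ss}_{\theta}$ is singular.

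It remains to transfer this singular moduli space from $B$ to $A$. Because $\dd$ is the dimension vector of a $B$-module, its support lies in the vertex set of $B$; and because $B$ is a full convex subcategory of $A$, any $A$-module of dimension vector $\dd$ is supported on $B$ and automatically satisfies precisely the bound quiver relations defining $B$, so that $\module(A,\dd)=\module(B,\dd)$ together with their $\GL(\dd)$-actions. Hence $\dd$ is a generic root of $A$ and $C$ is an indecomposable irreducible component of $\module(A,\dd)$. Extending $\theta$ to an integral weight $\widetilde{\theta}$ of $A$ by declaring it to be $0$ on the vertices outside $B$, one has $\widetilde{\theta}(\dd)=0$, and since $\theta$-semistability and $\theta$-stability of a representation supported on $B$ depend only on the coordinates of $\theta$ at the vertices of $B$, it follows that $C^{s}_{\widetilde{\theta}}=C^{s}_{\theta}\neq\emptyset$ and $\M(C)^{ss}_{\widetilde{\theta}}=\M(C)^{ss}_{\theta}$ is singular. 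This contradicts the assumption on $A$ and proves the proposition.

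The crux of the argument is the input used in the second paragraph: one needs the fact that the hypercritical algebras supplied by the criterion of \cite{BruPenSko} are genuinely tilted algebras, so that Theorem~\ref{quasi-tilted-rationalinv-thm} becomes available for $B$. The remaining ingredients --- that restricting a support to a full convex subcategory leaves the module variety, its $\GL(\dd)$-action, the notion of generic root, the (semi)stable loci, and consequently the moduli spaces $\M(C)^{ss}_{\theta}$ unchanged --- are routine once one observes that all of these notions are intrinsic to the triple consisting of $C$, the group $\GL(\dd)$, and the weight.
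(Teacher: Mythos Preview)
Your argument is correct and follows essentially the same route as the paper. The paper's proof is terser: it assumes $A$ wild, invokes \cite[Corollary~1]{BruPenSko} to obtain a convex hypercritical (hence wild tilted) subcategory $B$, and then appeals directly to Proposition~\ref{singular-moduli-wild-prop} (rather than to the full Theorem~\ref{quasi-tilted-rationalinv-thm}) to produce the singular moduli space; the transfer from $B$ to $A$ via convexity, which you spell out carefully, is left implicit there.
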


We would like to point out that the equivalence $(1) \Longleftrightarrow (3)$ of Theorem \ref{quasi-tilted-rationalinv-thm} settles in the affirmative a conjecture of Weyman for the class of tilted algebras, while Proposition \ref{strongly-simply-connected-prop} proves one implication of Weyman's conjecture for the class of strongly simply connected algebras (for more details, see Remark \ref{Jerzy-conj}).

Our next theorem, which plays a key role in proving Theorem \ref{quasi-tilted-rationalinv-thm} and Proposition \ref{strongly-simply-connected-prop}, identifies integral weights of an algebra for which the corresponding moduli spaces of semi-stable modules are preserved under titling. We should point out that our next theorem generalizes Domokos-Lenzing's Theorem 6.3 in \cite{DL} to arbitrary bound quiver algebras. (The details of our notations can be found in Section \ref{moduli-tilting-sec}.)

\begin{theorem}\label{moduli-tilting-thm} Let $A=kQ/I$ be a bound quiver algebra, $T$ a basic titling $A$-module, and $\theta$ an integral weight of $A$ which is well-positioned with respect to $T$. Let $F$ be either the functor $\Hom_A(T,\underline{\phantom{\star}})$ in case there are non-zero $\theta$-semi-stable torsion $A$-modules or the functor $\Ext^1_A(T,\underline{\phantom{\star}})$ in case there are non-zero $\theta$-semi-stable torsion-free $A$-modules. Denote the algebra $\End_A(T)^{op}$ by $B$ and let $u:K_0(A)\to K_0(B)$ be the isometry induced by the tilting module $T$. Then the following statements hold true:
\begin{enumerate}
\renewcommand{\theenumi}{\alph{enumi}}
\item the functor $F$ defines an equivalence of categories between $\module(A)^{ss}_{\theta}$ and $\module(B)^{ss}_{\theta'}$ where $\theta'=|\theta \circ u^{-1}|$; 
\item the bijective map $f:\M(A,\dd)^{ss}_{\theta}\to \M(B,\dd')^{ss}_{\theta'}$ induced by $F$ is an isomorphism of algebraic varieties where $\dd$ is a $\theta$-semi-stable dimension vector of $A$ and $\dd'=u(\dd)$.
 \end{enumerate}
\end{theorem}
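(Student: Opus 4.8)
The plan is to deduce both statements from the Brenner--Butler tilting theorem, after translating King's semistability condition across the torsion pairs attached to $T$; the bijection of (b) will then be upgraded to an isomorphism by passing to families.

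\textbf{Setup.} Recall that $T$ determines a torsion pair $(\mathcal{T},\mathcal{F})$ in $\module(A)$, with $\mathcal{T}=\{M\mid\Ext^1_A(T,M)=0\}$ and $\mathcal{F}=\{M\mid\Hom_A(T,M)=0\}$, and a reversed torsion pair $(\mathcal{X},\mathcal{Y})$ in $\module(B)$, with $\mathcal{X}=\{N\mid N\otimes_B T=0\}$ and $\mathcal{Y}=\{N\mid\Tor^B_1(N,T)=0\}$, such that $\Hom_A(T,-)\colon\mathcal{T}\to\mathcal{Y}$ and $\Ext^1_A(T,-)\colon\mathcal{F}\to\mathcal{X}$ are exact equivalences with exact quasi-inverses $-\otimes_B T$ and $\Tor^B_1(-,T)$, and such that $u([M])=[\Hom_A(T,M)]-[\Ext^1_A(T,M)]$ in $K_0(B)$ for every $M$. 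I write $F^{-}$ for the quasi-inverse of $F$ (i.e. $-\otimes_B T$ in the torsion case, $\Tor^B_1(-,T)$ in the torsion-free case).

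\textbf{Part (a).} The first thing to do is to read off from the definition of ``well-positioned'' that in the torsion case every $\theta$-semistable $A$-module lies in $\mathcal{T}$, in the torsion-free case every $\theta$-semistable $A$-module lies in $\mathcal{F}$, and in either case the coordinate-wise absolute value in $\theta'=|\theta\circ u^{-1}|$ is inert on the relevant dimension vectors, so that $\theta'(\underline{\dim}\,FM')=\theta(\underline{\dim}\,M')$ for every $M'$ in the pertinent half of the torsion pair (using the identity for $u$, and $\underline{\dim}\,FM'=\pm u(\underline{\dim}\,M')$ there). Granting this, I would argue in the torsion case as follows, the torsion-free case being dual with ``submodule'' replaced by ``quotient module''. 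Let $M$ be $\theta$-semistable; then $M\in\mathcal{T}$ and $\theta'(\underline{\dim}\,FM)=\theta(\underline{\dim}\,M)=0$. Since $\mathcal{Y}$ is closed under submodules and $F\colon\mathcal{T}\to\mathcal{Y}$ is an exact equivalence, every submodule of $FM$ has the form $FM'$ for a unique submodule $M'\subseteq M$ lying in $\mathcal{T}$; because $M$ is $\theta$-semistable, $\theta'(\underline{\dim}\,FM')=\theta(\underline{\dim}\,M')\le 0$, so $FM$ is $\theta'$-semistable. Thus $F$ restricts to a functor $\module(A)^{ss}_{\theta}\to\module(B)^{ss}_{\theta'}$, and symmetrically $F^{-}$ restricts to a functor in the other direction. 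As $F$ and $F^{-}$ are restrictions of the Brenner--Butler equivalences and $\module(A)^{ss}_{\theta}$, $\module(B)^{ss}_{\theta'}$ are full subcategories, the unit and counit isomorphisms restrict, which proves (a).

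\textbf{Part (b).} By (a) the functor $F$ is an exact equivalence of the finite-length abelian categories $\module(A)^{ss}_{\theta}$ and $\module(B)^{ss}_{\theta'}$, hence it takes $\theta$-stable modules (the simple objects of the former) to $\theta'$-stable modules, preserves Jordan--H\"older factors and finite direct sums, and therefore sends $\theta$-polystable modules to $\theta'$-polystable ones and preserves $S$-equivalence. Since the closed points of $\M(A,\dd)^{ss}_{\theta}$ and $\M(B,\dd')^{ss}_{\theta'}$ are the $S$-equivalence classes of the respective semistable modules of dimension vectors $\dd$ and $\dd'=u(\dd)$, with $\underline{\dim}\,FM$ constantly $\dd'$ on $\theta$-semistable $M$ of dimension vector $\dd$, the functor $F$ induces the asserted bijection $f$ on closed points, with inverse induced by $F^{-}$. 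To see that $f$ is a morphism, I would fix a projective presentation $0\to P_{-1}\to P_0\to T\to 0$ (available since $\operatorname{pd}_A T\le 1$): then on an arbitrary family of $A$-modules the functors $\Hom_A(T,-)$ and $\Ext^1_A(T,-)$ are the kernel and cokernel of the induced map of trivial bundles $\Hom_A(P_0,-)\to\Hom_A(P_{-1},-)$, and because the well-positioned hypothesis keeps every point of $\module(A,\dd)^{ss}_{\theta}$ inside $\mathcal{T}$ (resp.\ $\mathcal{F}$) this map has locally constant rank there, so applying $F$ to the tautological family yields a vector bundle of rank $\dd'$ underlying a family of $\theta'$-semistable $B$-modules. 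The assignment $M\mapsto[FM]$ is then a $\GL(\dd)$-invariant morphism $\module(A,\dd)^{ss}_{\theta}\to\M(B,\dd')^{ss}_{\theta'}$ (locally on the source: trivialize the bundle and postcompose $F$ of the tautological family with the quotient map), which factors through a unique morphism $f$ by the universal property of the good quotient $\pi_A\colon\module(A,\dd)^{ss}_{\theta}\to\M(A,\dd)^{ss}_{\theta}$. The symmetric construction applied to $F^{-}$ (via a projective presentation of $T$ as a right $B$-module) produces a morphism $g$ in the reverse direction, and $f$, $g$ are the mutually inverse bijections above on closed points. As $\M(A,\dd)^{ss}_{\theta}$ and $\M(B,\dd')^{ss}_{\theta'}$ are $\Proj$ of reduced finitely generated graded rings of semi-invariants, hence reduced varieties over the algebraically closed field $k$, the endomorphisms $g\circ f$ and $f\circ g$ are the identity on closed points and so are the identity; therefore $f$ is an isomorphism of algebraic varieties.

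\textbf{Expected main obstacle.} The real work is concentrated in the opening step of Part (a): verifying that ``well-positioned'' genuinely forces the $\theta$-semistable modules into a single half of the torsion pair and fixes the sign of $\theta\circ u^{-1}$, so that $\theta$-semistability is both preserved and reflected by $F$. Once that is in hand the semistability translation is essentially formal---every sub/quotient of $FM$ is already visible from $M$, so the sub/quotients of $M$ that fall outside $\mathcal{T}$ resp.\ $\mathcal{F}$ never intervene---and the geometric content of (b), namely local freeness of the relevant kernel/cokernel bundles, base change, and descent to the good quotient, is routine.
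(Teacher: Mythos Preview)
Your argument for Part (a) has a real gap. You claim that, since $\mathcal{Y}$ is closed under submodules and $F\colon\mathcal{T}\to\mathcal{Y}$ is an equivalence, every submodule $N'\subseteq FM$ is $FM'$ for a \emph{submodule} $M'\subseteq M$ lying in $\mathcal{T}$. This is false: the inclusion $N'\hookrightarrow FM$ corresponds under $F^{-1}$ to a morphism $\phi\colon M'\to M$ with $M'\in\mathcal{T}$, but $\phi$ is only a monomorphism \emph{in the category} $\mathcal{T}$, not in $\module(A)$. Since $\mathcal{T}$ is a torsion class it is closed under quotients, not submodules, so $\ker\phi$ need not vanish; all one can say (from left exactness of $\Hom_A(T,-)$) is that $\ker\phi\in\mathcal{F}$. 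The paper's proof uses exactly this: one writes $\theta'(\ddim N')=\theta(\ddim M')=\theta(\ddim\ker\phi)+\theta(\ddim\Ima\phi)$, and then invokes the \emph{strict} inequality $\theta<0$ on $\mathcal{F}\setminus\{0\}$ from the well-positioned hypothesis to bound the first summand, together with $\theta$-semistability of $M$ (applied to the genuine submodule $\Ima\phi$) for the second. Your sentence ``every sub/quotient of $FM$ is already visible from $M$'' in the final paragraph is therefore precisely the point that fails.

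The same omission undermines your ``symmetrically'' step. Before $F^{-}$ can be applied, one must show that every $\theta'$-semistable $B$-module lies in $\mathcal{Y}$; this is not part of any definition and is where the work is. The paper proves it via the canonical sequence $0\to\Ext^1_A(T,\Tor^1_B(T,\widetilde N))\to\widetilde N\to\Hom_A(T,T\otimes_B\widetilde N)\to 0$: the left-hand term has dimension vector $-u(\ddim\Tor^1_B(T,\widetilde N))$, so $\theta'$ of it equals $-\theta(\ddim\Tor^1_B(T,\widetilde N))$, which is strictly positive unless $\Tor^1_B(T,\widetilde N)=0$. Again it is the strict inequality $\theta<0$ on $\mathcal{F}$ that does the job. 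A similar correction is needed when checking that $\widetilde M=T\otimes_B\widetilde N$ is $\theta$-semistable: for an arbitrary submodule $\widetilde M'\subseteq\widetilde M$ one has $\coker F(\pi)\in\mathcal{X}$ (with $\pi$ the projection), and the well-positioned hypothesis controls that cokernel. So your diagnosis in the ``Expected main obstacle'' paragraph is inverted: the well-positioned hypothesis is not merely a sign convention plus containment; its strict-inequality clause is exactly what compensates for the failure of $\mathcal{T}$ to be closed under submodules and of $\mathcal{Y}$ to be closed under quotients. (Incidentally, $|\theta\circ u^{-1}|$ in the paper is a global sign $\pm(\theta\circ u^{-1})$, not a coordinate-wise absolute value.)

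Your Part (b) is fine and is essentially the paper's argument; you use a projective presentation of $T$ to get constant-rank maps of trivial bundles, while the paper uses the Hochschild complex (its Lemma on families), but both produce the required family of $\theta'$-semistable $B$-modules and then descend via the coarse-moduli/good-quotient universal property.
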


In particular, this theorem allows us to transfer much of the geometry of $A$ over to that of $B$ (see for example Proposition \ref{singular-moduli-wild-prop}). 

It is natural to ask if the description of the fields of rational invariants and of the moduli spaces in Theorem \ref{quasi-tilted-rationalinv-thm} can be extended to irreducible components which are not necessarily indecomposable. To answer this question, we rely on two general reduction results. The first such result has been recently proved in \cite[Proposition 4.7]{CC9} and allows one to compute fields of rational invariants on irreducible components by reducing the considerations to the case where the irreducible components involved are indecomposable. For the second general reduction result, the starting point is the Derksen-Weyman's notion of $\theta$-stable decomposition of representation spaces for quivers without oriented cycles (see \cite{DW2}). Here, we first extend their notion to irreducible components of module varieties, and then explain how to extend Theorem 3.20 in \cite{DW2} to arbitrary bound quiver algebras:

\begin{theorem}\label{theta-stable-decomp-thm} Let $A=kQ/I$ be a bound quiver algebra and let $C \subseteq \module(A,\dd)$ be a  $\theta$-well-behaved irreducible component where $\theta$ is an integral weight of $A$. Let $$C=m_1\cdot C_1 \pp \ldots \pp m_n\cdot C_n$$ be the $\theta$-stable decomposition of $C$ where $C_i \subseteq \module(A,\dd_i)$, $1 \leq i \leq n$, are $\theta$-stable irreducible components, and $\dd_i\neq \dd_j$ for all $1 \leq i\neq j \leq n$. Assume that:

\begin{enumerate}
\renewcommand{\theenumi}{\arabic{enumi}}
\item $C$ contains the image of $X:=C_1^{m_1} \times \ldots \times C_n^{m_n}$ through the natural (diagonal) embedding $\mathcal{V}:=\module(Q,\dd_1)^{m_1}\times \ldots \times \module(Q,\dd_n)^{m_n}  \hookrightarrow \module(Q,\dd)$;
 
 \item $C$ is a normal variety.
 \end{enumerate}

Then,
$$
\mathcal{M}(C)^{ss}_{\theta} \cong S^{m_1}(\mathcal{M}(C_1)^{ss}_{\theta}) \times \ldots \times S^{m_n}(\mathcal{M}(C_n)^{ss}_{\theta}).
$$
\end{theorem}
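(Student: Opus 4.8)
The plan is to reduce the statement to the special case where $C$ is a single $\theta$-stable component (treated, in effect, by the identity $\M(C_i)^{ss}_\theta = \M(C_i)^{ss}_\theta$) together with a clean description of how GIT quotients interact with the diagonal embedding $\mathcal V \hookrightarrow \module(Q,\dd)$. First I would set up the invariant-theoretic picture: by King's construction, $\M(C)^{ss}_\theta = \Proj\bigl(\bigoplus_{m\ge 0} k[C]^{\GL(\dd),m\theta}\bigr)$, and similarly for each $\M(C_i)^{ss}_\theta$; the symmetric product $S^{m_i}(\M(C_i)^{ss}_\theta)$ arises because the subgroup of $\GL(\dd)$ preserving the diagonal locus $X$ contains, beyond $\prod_i \GL(\dd_i)^{m_i}$, the symmetric groups $\mathfrak S_{m_i}$ permuting the isomorphic blocks. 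So the target on the right is exactly the GIT quotient of $X = C_1^{m_1}\times\cdots\times C_n^{m_n}$ by $\prod_i(\GL(\dd_i)\wr \mathfrak S_{m_i})$ with respect to the restricted weight.

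The core of the argument is then a restriction-of-semi-invariants statement: I would show that the inclusion $X \hookrightarrow C$ induces an isomorphism on the relevant rings of semi-invariants, i.e. $k[C]^{\GL(\dd),m\theta} \xrightarrow{\ \sim\ } k[X]^{G',m\theta}$ where $G' = \prod_i(\GL(\dd_i)\wr\mathfrak S_{m_i})$, for every $m$. Surjectivity here is the substantive point; it should follow by combining (i) hypothesis (1), which guarantees $X$ actually sits inside $C$ so that restriction is defined and the $\theta$-semistable locus of $C$ is \emph{saturated} by orbits meeting $X$ — every $\theta$-semistable point of $C$ degenerates to a semisimple one lying in the image of $\mathcal V$, by the standard Jordan--Hölder argument applied to the $\theta$-stable decomposition — with (ii) hypothesis (2), normality of $C$, which lets me promote ``agree on a dense saturated subset and separate the same points'' to ``the induced morphism of $\Proj$'s is an isomorphism'' (a normal variety that is bijective and birational onto a normal target is an isomorphism, via Zariski's main theorem). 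Injectivity of the restriction on semi-invariants is immediate since the $\GL(\dd)$-saturation of $X$ is dense in $C$.

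Concretely the steps are: (1) recall King's GIT description of all moduli spaces involved and identify $S^{m_i}(\M(C_i)^{ss}_\theta)$ with $\module(Q,\dd_i)^{m_i}/\!/_\theta(\GL(\dd_i)\wr\mathfrak S_{m_i})$; (2) using hypothesis (1) and the Jordan--Hölder/$S$-equivalence structure of $\theta$-semistable modules, show the closed $\GL(\dd)$-orbits in $C^{ss}_\theta$ are precisely those of the form $\bigoplus_i (V_i^{(1)}\oplus\cdots\oplus V_i^{(m_i)})$ with $V_i^{(j)}\in C_i$ $\theta$-stable, hence lie in the image of $X$; (3) deduce that the natural map $\mathcal M(C)^{ss}_\theta \to \mathcal M(X)^{ss}_\theta =: S^{\bullet}(\cdots)$ is a bijection on points; (4) invoke normality of $C$ (hypothesis (2)) and normality of the symmetric products (symmetric products of normal varieties are normal) to upgrade the bijective birational morphism to an isomorphism. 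I expect step (2)–(3) — pinning down that \emph{every} closed semistable orbit, not merely a dense set of them, is captured by the diagonal locus $X$, and that no extra identifications are introduced — to be the main obstacle; this is exactly where both hypotheses are needed and where the analogue of Derksen--Weyman's Theorem 3.20 must be adapted past the hereditary case, since for a general bound quiver algebra one no longer has the comfort of $\module(Q,\dd)$ being the full representation space and must argue within the possibly-singular ambient $\module(A,\dd)$ before restricting to the normal component $C$.
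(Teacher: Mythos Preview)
Your overall strategy---build a morphism between $\prod_i S^{m_i}(\M(C_i)^{ss}_\theta)$ and $\M(C)^{ss}_\theta$ from the inclusion $X \hookrightarrow C$, argue it is birational, then invoke normality---is the paper's. Two points need correction.

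First, the direction. The inclusion $X \hookrightarrow C$ induces restriction $k[C]^{\GL(\dd),m\theta} \to k[X]^{G',m\theta}$, hence a morphism of $\Proj$'s going from $S^\bullet := \prod_i S^{m_i}(\M(C_i)^{ss}_\theta)$ \emph{to} $\M(C)^{ss}_\theta$, not the other way. There is no evident algebraic ``take the associated graded'' morphism in the reverse direction. So the map in your step~(3) points the wrong way; it is the normality of the \emph{target} $\M(C)^{ss}_\theta$ (inherited from $C$) that matters, and the normality of the symmetric products in your step~(4) plays no role.

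Second, and this is the substantive gap, you are missing the finiteness argument. ``Bijective and birational onto a normal target'' does not suffice for Zariski's Main Theorem as you invoke it; one needs finiteness (or quasi-finiteness plus properness). The paper does not attempt to verify bijectivity on all closed orbits. Instead it works on the level of affine cones: setting $G_\theta=\ker\chi_\theta\subseteq\GL(\dd)$, one has $k[C]^{G_\theta}=\bigoplus_{m\ge 0}\SI(C)_{m\theta}$ and likewise for $X$, and the relevant map is $\widetilde\psi\colon X/\!/G \to C/\!/G_\theta$. The paper then proves a separate nullcone lemma (Lemma~\ref{finite-morphism-lemma}): if $\psi^{-1}(0)=\{0\}$ for the ambient map $\psi\colon\mathcal V/\!/G\to\module(Q,\dd)/\!/G_\theta$, then $\psi$ (hence $\widetilde\psi$) is finite. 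The hypothesis $\psi^{-1}(0)=\{0\}$ amounts to the elementary fact that $M\in\mathcal V$ lies in the $G$-nullcone iff some direct summand of $M$ is $\theta$-unstable. With finiteness in hand, dominance (your Jordan--H\"older argument, which the paper uses verbatim) and generic injectivity on the open locus $X^0=(C_1^s)^{m_1}\times\cdots\times(C_n^s)^{m_n}$ give birationality; then finite $+$ birational $+$ normal target yields the isomorphism of graded rings and hence of $\Proj$'s. Your step~(2)---that \emph{every} closed $\theta$-semistable orbit in $C$ lies in the image of $X$---is stronger than needed and not clearly true for non-generic semistable points; the paper only establishes dominance and lets finiteness carry the rest.
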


Note that this reduction result allows us to ``break'' a moduli space of modules into smaller ones which are typically easier to handle (for further details, see Section \ref{Theta-stable-decomp-irr-comp-sec}). 

Using our results described above, we can prove:

\begin{proposition} \label{ratio-inv-moduli-tame-quasi-prop} Let $A=kQ/I$ be a tame quasi-tilted algebra, $\dd$ a dimension vector of $A$, and $C$ an irreducible component of $\module(A,\dd)$. The following statements hold true.
\begin{enumerate}
\renewcommand{\theenumi}{\arabic{enumi}}
\item The field of rational invariants $k(C)^{\GL(\dd)}\simeq k(x_1,\ldots, x_N)$ where $N$ the sum of the multiplicities of the isotropic imaginary roots that occur in the generic decomposition of $\dd$ in $C$.
\item If $\dd$ is an isotropic root of $A$ then the moduli spaces $\M(C)^{ss}_{\theta}$, $\theta \in \ZZ^{Q_0}$, are products of projective spaces.
\end{enumerate}
\end{proposition}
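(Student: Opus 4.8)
The plan is to reduce both statements to the indecomposable case and then invoke the structural results already established. First I would recall that since $A$ is a tame quasi-tilted algebra, every irreducible component $C \subseteq \module(A,\dd)$ admits a generic decomposition $C = \overline{C_1 \oplus \cdots \oplus C_r}$ into indecomposable irreducible components $C_i \subseteq \module(A,\dd_i)$, and that by the tameness hypothesis each $\dd_i$ is either a real Schur root (in which case $C_i$ is the orbit closure of an exceptional module and $k(C_i)^{\GL(\dd_i)} \simeq k$) or an isotropic imaginary (Schur) root. The key input here is that for a tame quasi-tilted algebra the indecomposable generically-present summands are controlled: there are no non-isotropic imaginary roots appearing, because such a root would force a wild-type behaviour (this is where one uses the classification of tame quasi-tilted algebras, e.g.\ via the one-point extensions/tubular pieces, or equivalently Br{\"u}stle--de la Pe{\~n}a--Skowro{\'n}ski). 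For a component $C_i$ supported on an isotropic imaginary root, $C_i$ behaves like a homogeneous tube, so $k(C_i)^{\GL(\dd_i)} \simeq k(x)$ and the moduli spaces $\M(C_i)^{ss}_\theta$ are $\PP^1$ or a point.

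For part (1), I would apply the reduction result \cite[Proposition 4.7]{CC9} referenced in the introduction, which computes $k(C)^{\GL(\dd)}$ as a purely transcendental extension of $k$ whose transcendence degree is the sum of the $k(C_i)^{\GL(\dd_i)}$-transcendence degrees over the generic summands. Since each real-Schur-root summand contributes $0$ and each isotropic-imaginary-root summand contributes $1$, the field $k(C)^{\GL(\dd)}$ is rational of transcendence degree exactly $N$, the total multiplicity of isotropic imaginary roots in the generic decomposition of $\dd$ in $C$. This gives $k(C)^{\GL(\dd)} \simeq k(x_1,\ldots,x_N)$.

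For part (2), when $\dd$ is itself an isotropic root of $A$, I would use Theorem \ref{theta-stable-decomp-thm}: for any integral weight $\theta$, the $\theta$-stable decomposition of $C$ groups the generic summands by their dimension vectors, and the theorem gives
\[
\M(C)^{ss}_\theta \cong S^{m_1}\bigl(\M(C_1)^{ss}_\theta\bigr) \times \cdots \times S^{m_n}\bigl(\M(C_n)^{ss}_\theta\bigr),
\]
provided the hypotheses (the diagonal-embedding condition and normality of $C$) are verified. Each $\M(C_i)^{ss}_\theta$ is either a point or $\PP^1$ by the indecomposable analysis above, and the symmetric product $S^{m}(\PP^1) \cong \PP^m$; hence each factor is a projective space and the product is a product of projective spaces. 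The main obstacle I anticipate is verifying the hypotheses of Theorem \ref{theta-stable-decomp-thm} in this generality — in particular establishing normality of $C$ and the fact that $C$ contains the image of the diagonal embedding of the product of the stable pieces. For tame quasi-tilted algebras this should follow because the relevant module varieties are, up to the tilting/derived equivalences and the structure of the Auslander--Reiten quiver, essentially built from representation spaces of tame hereditary or tubular type, where normality and the decomposition behaviour are known; but writing this carefully, and handling the edge case where $\theta$ vanishes on some summand, is the part that requires the most work. Once that is in place, both statements follow formally.
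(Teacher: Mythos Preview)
Your approach is essentially the same as the paper's: part (1) via Proposition~\ref{rational-inv-generic-decomp-prop} together with the indecomposable case (Proposition~\ref{qtilted-prop}(1), noting $S^m(k(x)/k)\simeq k(t_1,\dots,t_m)$), and part (2) via Theorem~\ref{theta-stable-decomp-thm} plus $S^m(\mathbb{P}^1)\simeq\mathbb{P}^m$. The hypotheses you flagged as needing work---normality of $C$, $\theta$-well-behavedness, and the diagonal-embedding condition---are handled in the paper not by tilting arguments but directly from Bobi{\'n}ski--Skowro{\'n}ski \cite{BS1} (Theorem~\ref{BS-tame-quasi-tilted-thm} here): for an isotropic root $\dd$ the entire variety $\module(A,\dd)$ is normal and irreducible, so $C=\module(A,\dd)$, the embedding condition is automatic, and well-behavedness follows from uniqueness of indecomposable components.
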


We would like to point out that our proof of Proposition \ref{ratio-inv-moduli-tame-quasi-prop}{(1)} provides another approach to proving Domokos and Lenzing's Corollary 7.4 in \cite{DL2}. 

The layout of this paper is as follows. In Section \ref{Module-varieties-sec}, we recall some background material on irreducible components of module varieties and their rational invariants.  In Section \ref{Moduli-spaces-sec}, we first review King's construction of moduli spaces of modules for algebras, and then prove Theorem \ref{moduli-tilting-thm} in Section \ref{moduli-tilting-sec}. In Section \ref{Theta-stable-decomp-irr-comp-sec}, we first explain how to extend the Derksen-Weyman's notion of $\theta$-stable decomposition from \cite{DW2} to quivers with relations, and then prove Theorem \ref{theta-stable-decomp-thm}. We prove Theorem \ref{quasi-tilted-rationalinv-thm} and Proposition \ref{ratio-inv-moduli-tame-quasi-prop} in Section \ref{Tilted-algebras-sec}.

\section{Background on module varieties} \label{Module-varieties-sec} Let $Q=(Q_0,Q_1,t,h)$ be a finite quiver with vertex set $Q_0$ and arrow set $Q_1$. The two functions $t,h:Q_1 \to Q_0$ assign to each arrow $a \in Q_1$ its tail \emph{ta} and head \emph{ha}, respectively.

A representation $V$ of $Q$ over $k$ is a collection $(V(i),V(a))_{i\in Q_0, a\in Q_1}$ of finite-dimensional $k$-vector spaces $V(i)$, $i \in Q_0$, and $k$-linear maps $V(a) \in \Hom_k(V(ta),V(ha))$, $a \in Q_1$. The dimension vector of a representation $V$ of $Q$ is the function $\ddim V : Q_0 \to \ZZ$ defined by $(\ddim V)(i)=\dim_{k} V(i)$ for $i\in Q_0$. Let $S_i$ be the one-dimensional representation of $Q$ at vertex $i \in Q_0$ and let us denote by $\ee_i$ its dimension vector. By a dimension vector of $Q$, we simply mean a function $\dd \in \ZZ_{\geq 0}^{Q_0}$.

Given two representations $V$ and $W$ of $Q$, we define a morphism $\varphi:V \rightarrow W$ to be a collection $(\varphi(i))_{i \in Q_0}$ of $k$-linear maps with $\varphi(i) \in \Hom_k(V(i), W(i))$ for each $i \in Q_0$, and such that $\varphi(ha)V(a)=W(a)\varphi(ta)$ for each $a \in Q_1$. We denote by $\Hom_Q(V,W)$ the $k$-vector space of all morphisms from $V$ to $W$. Let $V$ and $W$ be two representations of $Q$. We say that $V$ is a subrepresentation of $W$ if $V(i)$ is a subspace of $W(i)$ for each $i \in Q_0$ and $V(a)$ is the restriction of $W(a)$ to $V(ta)$ for each $a \in Q_1$. In this way, we obtain the abelian category $\rep(Q)$ of all representations of $Q$.

Given a quiver $Q$, its path algebra $kQ$ has a $k$-basis consisting of all paths (including the trivial ones) and the multiplication in $kQ$ is given by concatenation of paths. It is easy to see that any $kQ$-module defines a representation of $Q$, and vice-versa. Furthermore, the category $\module(kQ)$ of $kQ$-modules is equivalent to the category $\rep(Q)$. In what follows, we identify $\module(kQ)$ and $\rep(Q)$, and use the same notation for a module and the corresponding representation.

A two-sided ideal $I$ of $kQ$ is said to be \emph{admissible} if there exists an integer $L\geq 2$ such that $R_Q^L\subseteq I \subseteq R_Q^2$. Here, $R_Q$ denotes the two-sided ideal of $kQ$ generated by all arrows of $Q$. 

If $I$ is an admissible ideal of $KQ$, the pair $(Q,I)$ is called a \emph{bound quiver} and the quotient algebra $kQ/I$ is called the \emph{bound quiver algebra} of $(Q,I)$.  Any admissible ideal is generated by finitely many admissible relations, and any bound quiver algebra is finite-dimensional and basic. Moreover, a bound quiver algebra $kQ/I$ is connected if and only if (the underlying graph of) $Q$ is connected (see for example \cite{AS-SI-SK}). 

It is well-known that any basic algebra $A$ is isomorphic to the bound quiver algebra of a bound quiver $(Q_{A},I)$, where $Q_{A}$ is the Gabriel quiver of $A$ (see \cite{AS-SI-SK}). (Note that the ideal of relations $I$ is not uniquely determined by $A$.) We say that $A$ is a \emph{triangular} algebra if its Gabriel quiver has no oriented cycles.

Fix a bound quiver $(Q,I)$ and let $A=kQ/I$ be its bound quiver algebra.  We denote by $e_i$ the primitive idempotent corresponding to the vertex $i\in Q_0$.  A representation $M$ of a $A$ (or  $(Q,I)$) is just a representation $M$ of $Q$ such that $M(r)=0$ for all $r \in I$. The category $\module(A)$ of finite-dimensional left $A$-modules is equivalent to the category $\rep(A)$ of representations of $A$. As before, we identify $\module(A)$ and $\rep(A)$, and make no distinction between $A$-modules and representations of $A$. 

Assume form now on that $A$ has finite global dimension; this happens, for example, when $Q$ has no oriented cycles. The Ringel form of $A$ is the bilinear form  $\langle \cdot, \cdot \rangle_{A} : \ZZ^{Q_0}\times \ZZ^{Q_0} \to \ZZ$ defined by
$$
\langle \dd,\ee \rangle_{A}=\sum_{l\geq 0}(-1)^l \sum_{i,j\in Q_0}\dim_k \Ext^l_{A}(S_i,S_j)\dd(i)\ee(j).
$$
Note that if $M$ is a $\dd$-dimensional $A$-module and $N$ is an $\ee$-dimensional $A$-module then
$$
\langle \dd,\ee \rangle_{A}=\sum_{l\geq 0}(-1)^l \dim_k \Ext^l_{A}(M,N).
$$
The quadratic form induced by $\langle \cdot,\cdot \rangle_{A}$ is denoted by $\chi_{A}$.

The \emph{Tits form} of $A$ is the integral quadratic form $q_{A}: \ZZ^{Q_0} \to \ZZ$ defined by
$$q_{A}(\dd):=\sum_{i \in Q_0}\dd^2(i)-\sum_{i,j\in Q_0}\dim_{k}\Ext^1_{A}(S_i,S_j)\dd(i)\dd(j)+\sum_{i,j\in Q_0}\dim_{k}\Ext^2_{A}(S_i,S_j)\dd(i)\dd(j).$$

If $A$ is triangular then $r(i,j):=|R \cap e_j\langle R \rangle e_i|$ is precisely $\dim_{k}\Ext^2_{A}(S_i,S_j)$, $\forall i,j \in Q_0$, as shown by Bongartz in \cite{Bon}. So, in the triangular case, we can write
$$
q_{A}(\dd)=\sum_{i \in Q_0}\dd^2(i)-\sum_{a\in Q_1}\dd(ta)\dd(ha)+\sum_{i,j\in Q_0}r(i,j)\dd(i)\dd(j).
$$

\subsection{The generic decomposition for irreducible components} 

Let $\dd$ be a dimension vector of $A$ (or equivalently, of $Q$). The variety of $\dd$-dimensional $A$-modules is the affine variety
$$
\module(A,\dd)=\{M \in \prod_{a \in Q_1} \Mat_{\dd(ha)\times \dd(ta)}(k) \mid M(r)=0, \forall r \in
I \}.
$$
It is clear that $\module(A,\dd)$ is a $\GL(\dd)$-invariant closed subset of the affine space $\module(Q,\dd):= \prod_{a \in Q_1} \Mat_{\dd(ha)\times \dd(ta)}(k)$. Note that $\module(A, \dd)$ does not have to be irreducible. We call $\module(A,\dd)$ the \emph{module variety} of $\dd$-dimensional $A$-modules. We also denote by $\ind(A,\dd)$ the (possibly empty) constructible subset of all indecomposable modules in $\module(A,\dd)$. 

Let $C$ be an irreducible component of $\module(A, \dd)$. We say that $C$ is \emph{indecomposable} if $C$ has a non-empty open subset of indecomposable modules. We call $C$ a \emph{Schur} irreducible component if $C$ contains a Schur $A$-module. (Recall that a Schur $A$-module is just an $A$-module $M$ such that $\End_A(M)\simeq k$.) Note that a Schur irreducible component is always indecomposable. The converse is always true for path algebras of quivers without oriented cycles. Finally, we say that $\dd$ is a \emph{generic root} of $A$ if $\module(A,\dd)$ has an indecomposable irreducible component.

Now, let us consider a decomposition $\dd=\dd_1+\ldots +\dd_t$ where $\dd_i \in \ZZ^{Q_0}_{\geq 0}, 1 \leq i \leq t$. If $C_i$ is a $\GL(\dd_i)$-invariant subset of $\module(A,\dd_i)$, $1 \leq i \leq t$, we denote by $C_1\oplus \ldots \oplus C_t$ the constructible subset of $\module(A,\dd)$ consisting of all modules isomorphic to direct sums of the form $\bigoplus_{i=1}^t X_i$ with $X_i \in C_i, \forall 1 \leq i \leq t$.

As shown by de la Pe{\~n}a \cite[Section 1.3]{delaP}, and Crawley-Boevey and Schr\"oer \cite[Theorem 1.1]{C-BS}, if $C$ is an irreducible component of $\module(A,\dd)$ then there are unique generic roots $\dd_1, \ldots, \dd_t$ of $A$ such that $\dd=\dd_1+\ldots +\dd_t$ and
$$
C=\overline{C_1\oplus \ldots \oplus C_t}
$$
for some indecomposable irreducible components $C_i$ of $\module(A,\dd_i), 1 \leq i \leq t$. Moreover, the indecomposable irreducible components $C_i, 1 \leq i \leq t,$ are uniquely determined by this property. We call $\dd=\dd_1\oplus \ldots \oplus \dd_t$ the generic decomposition of $\dd$ in $C$, and $C=\overline{C_1\oplus \ldots \oplus C_t}$ the generic decomposition of $C$.

Recall that for an irreducible component $C \subseteq \module(A,\dd)$, the field of rational $\GL(\dd)$-invariants on $C$ is 
$$
k(C)^{\GL(\dd)}=\{\phi \in k(C) \mid g\cdot \phi =\phi, \forall g\in \GL(\dd)\}.
$$

In what follows, if $R$ is an integral domain, we denote its field of fractions by $\qq(R)$. Moreover, if $K/k$ is a field extension and $m$ is a positive integer, we define $S^m(K/k)$ to be the field $(\qq(K^{\otimes m}))^{S_m}$ which is, in fact, the same as $\qq((K^{\otimes m})^{S_m})$ since $S_m$ is a finite group.

\begin{proposition}\label{rational-inv-generic-decomp-prop} \cite[Proposition 4.7]{CC9} Assume that the generic decomposition of $C$ is of the form $$C=\overline{C_1^{\oplus m_1}\oplus \ldots \oplus C_n^{\oplus m_n}},$$ where $C_i \subseteq \module(A,\dd_i)$, $1 \leq i \leq n$, are indecomposable irreducible components, $m_1,\ldots, m_n$ are positive integers, and $\dd_i\neq \dd_j, \forall 1 \leq i\neq j\leq n$. Then,
$$
k(C)^{\GL(\dd)} \simeq \qq(\bigotimes_{i=1}^n S^{m_i}(k(C_i)^{\GL(\dd_i)}/k)).
$$
\end{proposition}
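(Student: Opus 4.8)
The plan is to realize $k(C)^{\GL(\dd)}$ as the function field of a geometric quotient of a dense open subset of $C$, to describe that quotient birationally as a product of symmetric powers of the analogous quotients for the $C_i$, and finally to translate this back into the asserted statement about fields.

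First I would invoke Rosenlicht's theorem to fix a $\GL(\dd)$-stable dense open $C^{0}\subseteq C$ carrying a geometric quotient $\pi:C^{0}\to C^{0}/\GL(\dd)$ whose fibres are precisely the $\GL(\dd)$-orbits and with $k(C)^{\GL(\dd)}=k(C^{0}/\GL(\dd))$; likewise, for each $i$ I would fix a $\GL(\dd_i)$-stable dense open $C_i^{0}\subseteq C_i$, contained in the (dense, since $C_i$ is an indecomposable component) locus of indecomposable modules, with geometric quotient $\pi_i:C_i^{0}\to Y_i:=C_i^{0}/\GL(\dd_i)$ and $k(C_i)^{\GL(\dd_i)}=k(Y_i)$. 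Setting $\mathcal V^{0}:=\prod_{i=1}^{n}(C_i^{0})^{m_i}$, I would then consider the block-diagonal direct-sum morphism $\Phi:\mathcal V^{0}\to\module(A,\dd)$, $(X_{i,j})\mapsto\bigoplus_{i,j}X_{i,j}$, which by the definition of the generic decomposition $C=\overline{C_1^{\oplus m_1}\oplus\cdots\oplus C_n^{\oplus m_n}}$ is dominant onto $C$.

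The heart of the argument is a Krull--Schmidt computation of the fibres of $\pi\circ\Phi$. Let $\Gamma:=\prod_{i=1}^{n}\bigl(\GL(\dd_i)\wr S_{m_i}\bigr)=\prod_{i=1}^{n}\bigl(\GL(\dd_i)^{m_i}\rtimes S_{m_i}\bigr)$ act on $\mathcal V^{0}$ by letting the $i$-th factor act on the $m_i$-tuple labelled by $i$ through componentwise $\GL(\dd_i)$ and through permutation of its $m_i$ coordinates. This group embeds in $\GL(\dd)$ as block-monomial matrices, compatibly with $\Phi$, so $\pi\circ\Phi$ is constant on $\Gamma$-orbits. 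Conversely, on a dense open of $\mathcal V^{0}$, $\Phi(v)\cong\Phi(v')$ forces $v'\in\Gamma\cdot v$: after shrinking the $C_i^{0}$ we may assume that, for each $i$, the modules in $C_i^{0}$ are either pairwise non-isomorphic or all isomorphic (the rigid case), and since also $\dd_i\neq\dd_j$ for $i\neq j$, Krull--Schmidt matches the indecomposable summands of $\Phi(v)$ and $\Phi(v')$ block-by-block, producing permutations $\sigma_i\in S_{m_i}$ with $X'_{i,j}\in\GL(\dd_i)\cdot X_{i,\sigma_i(j)}$. (In the rigid case $Y_i$ is a point, $k(C_i)^{\GL(\dd_i)}\simeq k$, and the corresponding factor $S^{m_i}(k/k)\simeq k$ is trivial, so this case is harmless.) Hence $\pi\circ\Phi$ factors, on a dense open, through the geometric quotient $\mathcal V^{0}/\Gamma$; since, birationally, geometric quotients commute with products and with the residual symmetric-group actions, $\mathcal V^{0}/\Gamma$ is birational to $\prod_{i=1}^{n}\bigl(Y_i^{m_i}/S_{m_i}\bigr)=\prod_{i=1}^{n}S^{m_i}(Y_i)$, and the induced dominant map $\prod_i S^{m_i}(Y_i)\to C^{0}/\GL(\dd)$ is generically injective by the converse direction above, hence birational (characteristic zero). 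Passing to function fields and using $k(X\times Y)=\qq(k(X)\otimes_k k(Y))$ together with $k(S^{m_i}(Y_i))=k(Y_i^{m_i})^{S_{m_i}}=S^{m_i}(k(Y_i)/k)$ yields
$$
k(C)^{\GL(\dd)}=k\Bigl(\prod_{i=1}^{n}S^{m_i}(Y_i)\Bigr)\simeq\qq\Bigl(\bigotimes_{i=1}^{n}S^{m_i}\bigl(k(C_i)^{\GL(\dd_i)}/k\bigr)\Bigr).
$$

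I expect the main obstacle to be this fibre computation together with its descent to quotients: one must ensure the various dense open sets can be chosen $\GL$-stable and mutually compatible, that the geometric quotients (Rosenlicht) exist on them, and above all that the generic fibre of $\Phi$ over an orbit is exactly a single $\Gamma$-orbit -- this is where Krull--Schmidt and the hypothesis $\dd_i\neq\dd_j$ are essential, and where the rigid-component edge case must be dispatched. Once generic injectivity of the induced map on quotients is in hand, the final passage to fields of fractions is purely formal.
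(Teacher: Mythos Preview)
The paper does not actually prove this proposition; it is stated with a citation to \cite[Proposition 4.7]{CC9} and used as a black box. So there is no proof in the present paper to compare against.

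That said, your argument is correct and is essentially the standard one. The strategy---Rosenlicht to pass to geometric quotients on dense opens, the block-diagonal direct-sum map $\Phi$ which is dominant by the very definition of the generic decomposition, and Krull--Schmidt together with the hypothesis $\dd_i\neq\dd_j$ to identify the generic fibre of $\pi\circ\Phi$ with a single $\Gamma$-orbit for the wreath product $\Gamma=\prod_i(\GL(\dd_i)\wr S_{m_i})$---is exactly how this kind of result is proved, and is the approach taken in \cite{CC9}. Your treatment of the rigid case (where $C_i$ has a dense orbit, $Y_i$ is a point, and the factor contributes trivially) is correct and necessary. Two small points worth tightening: first, $\Phi(\mathcal V^0)$ need not land in $C^0$, so one should replace $\mathcal V^0$ by the dense open $\Phi^{-1}(C^0)$ before arguing; second, for the induced map on quotients you need not only generic injectivity but dominance, which holds because $\GL(\dd)\cdot\Phi(\mathcal V^0)$ is dense in $C$ and $\pi$ is the orbit map. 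With those routine adjustments your proof goes through.
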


In the next section, we present a homological method for studying fields of rational invariants on indecomposable irreducible components in module varieties.

\subsection{Exceptional sequences and rational invariants} \label{excep-ratio-inv-sec} Recall that a sequence $\mathcal{E}=(E_1, \dots, E_t)$ of $A$-modules is called an \emph{orthogonal exceptional sequence} if the following conditions are satisfied:
\begin{enumerate}
\renewcommand{\theenumi}{\arabic{enumi}}
\item $E_i$ is an exceptional $A$-module, i.e., $\End_{A}(E_i)=k$ and $\Ext^l_{A}(E_i,E_i)=0$ for all $l \geq 1$ and $1 \leq i \leq t$;

\item $\Ext_{A}^l(E_i,E_j)=0$ for all $l \geq 0$  and $1 \leq i<j \leq t$;

\item $\Hom_{A}(E_j,E_i)=0$ for all $1 \leq i<j \leq t$.
\end{enumerate}

Given an orthogonal exceptional sequence $\mathcal{E}$, consider the full subcategory $\filt_{\mathcal{E}}$ of $\module(A)$ whose objects $M$ have a finite filtration $0=M_0\subseteq M_1 \subseteq \dots \subseteq M_s=M$ of submodules such that each factor $M_j/M_{j-1}$ is isomorphic to one the $E_1, \ldots, E_t$. For a dimension vector $\dd$ of $A$, we define $\filt_{\mathcal E}(\dd)=\{M \in \module(A,\dd) \mid M \text{~is isomorphic to a module in~} \filt_{\mathcal{E}}\}$.

We will be especially interested in short orthogonal exceptional sequences. Specifically, as a first step in proving the rationality of fields of rational invariants for $A$, we will use the following direct consequence of the Reduction Theorem 1.2 from \cite{CC9}:

\begin{proposition} \label{rational-inv-quiverel-prop} Let $\dd$ be a generic root of $A$ and let $C \subseteq \module(A,\dd)$ be an indecomposable irreducible component. Assume that there exists an orthogonal exceptional sequence $\mathcal{E}=(E_1,E_2)$ of $A$-modules such that $\dd=\ddim E_1+\ddim E_2$, $\filt_{\mathcal{E}}(\dd) \cap C \neq \emptyset$, and $\dim Ext_{A}^2(E_2,E_1)=0$. Then, $k(C)^{\GL(\dd)}\simeq k(x_1,\ldots, x_{n-1})$ where $n=\dim_k \Ext_A^1(E_2,E_1)$.
\end{proposition}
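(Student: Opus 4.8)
The plan is to reduce the statement to a known rationality result for the two‑vertex generalized Kronecker quiver by transporting the problem along the exceptional sequence $\mathcal{E}=(E_1,E_2)$. First I would invoke the Reduction Theorem 1.2 from \cite{CC9} in the following form: the category $\filt_{\mathcal{E}}$ is equivalent, via the functor sending $M$ to the pair of multiplicities of its $\mathcal{E}$‑filtration together with the induced extension data, to the category $\rep(A_{\mathcal{E}})$ of representations of the "reduced" algebra $A_{\mathcal{E}}$ attached to $\mathcal{E}$. Since $\mathcal{E}$ has length two, is orthogonal, and satisfies $\Ext^1_A(E_1,E_2)=0$, $\Hom_A(E_2,E_1)=0$, the only possible nonzero data between $E_1$ and $E_2$ are the spaces $\Ext^1_A(E_2,E_1)$ and $\Ext^2_A(E_2,E_1)$; the hypothesis $\dim\Ext^2_A(E_2,E_1)=0$ kills the latter, so $A_{\mathcal{E}}$ is simply the path algebra of the $n$‑Kronecker quiver $K_n$ with $n=\dim_k\Ext^1_A(E_2,E_1)$, with no relations. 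Under this equivalence, a general module in $\filt_{\mathcal{E}}(\dd)$ corresponds (since $\dd=\ddim E_1+\ddim E_2$, so each $E_i$ occurs with multiplicity one) to a representation of $K_n$ of dimension vector $(1,1)$, i.e.\ a point of $k^n$ with $\GL(1)\times\GL(1)=\mathbb{G}_m^2$ acting.

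Next I would use the hypothesis $\filt_{\mathcal{E}}(\dd)\cap C\neq\emptyset$ together with the fact that $C$ is an \emph{indecomposable} irreducible component of $\module(A,\dd)$. The point is that the $\GL(\dd)$‑saturation of $\filt_{\mathcal{E}}(\dd)$ is a constructible, irreducible (it is dominated by an irreducible variety fibered over the quasi‑affine $\filt$-parameter space) subset of $\module(A,\dd)$ whose closure is a union of irreducible components; since it meets $C$ and $C$ is irreducible, and since the generic module in this saturation is indecomposable exactly when the $K_n$‑representation it corresponds to is indecomposable (a generic point of $k^n$ for $n\geq 1$), one gets that this saturation is \emph{dense} in $C$. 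Hence $k(C)^{\GL(\dd)}$ is computed by restricting rational invariants to (a dense subset of) $\filt_{\mathcal{E}}(\dd)$, and by the equivalence of categories and the compatibility of the group actions, $k(C)^{\GL(\dd)}\simeq k(\module(K_n,(1,1)))^{\GL((1,1))}=k(k^n)^{\mathbb{G}_m^2}$.

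Finally I would compute $k(k^n)^{\mathbb{G}_m^2}$ directly. The torus $\mathbb{G}_m^2$ acts on coordinates $(x_1,\dots,x_n)$ by $(s,t)\cdot(x_i)=(st^{-1}x_i)$ — actually both factors act by the same character up to sign, so the effective action is that of a one‑dimensional torus $\mathbb{G}_m$ scaling all coordinates simultaneously. On the dense open set where $x_1\neq 0$, the ratios $x_2/x_1,\dots,x_n/x_1$ are algebraically independent invariants generating the invariant field, so $k(k^n)^{\mathbb{G}_m}\simeq k(x_1,\dots,x_{n-1})$, which is exactly the claimed field. I expect the main obstacle to be the second step: one must argue carefully that $\filt_{\mathcal{E}}(\dd)\cap C$, which a priori is only known to be nonempty, actually sweeps out a dense subset of $C$ under the $\GL(\dd)$‑action — this is where indecomposability of $C$ and the genericity statements built into the Reduction Theorem of \cite{CC9} must be combined, and where one has to be precise about the interplay between the parameter variety for $\mathcal{E}$‑filtered modules and the irreducible component $C$. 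Everything else is either a direct application of the cited reduction theorem or an elementary torus‑invariant computation.
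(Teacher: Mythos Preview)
Your approach is correct and is essentially the same as the paper's: identify $A_{\mathcal{E}}$ with the path algebra $kK_n$ (the vanishing of $\Ext^2_A(E_2,E_1)$ eliminates all relations), apply the Reduction Theorem~1.2 of \cite{CC9} to get $k(C)^{\GL(\dd)}\simeq k(\module(K_n,(1,1)))^{\GL((1,1))}$, and compute the latter as $k(x_1,\dots,x_{n-1})$. The density concern you flag in your second step---that the $\GL(\dd)$-saturation of $\filt_{\mathcal{E}}(\dd)$ is dense in $C$---is precisely part of what Theorem~1.2 of \cite{CC9} already packages, so the paper does not re-argue it and you need not either.
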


\begin{proof} First we note that the triangular algebra $A_{\mathcal{E}}$ which arises from the (minimal) $A_{\infty}$-algebra structure of the Yoneda algebra $\Ext^{\bullet}_{A}(E_1\oplus E_2, E_1 \oplus E_2)$ is precisely the path algebra of the generalized Kronecker quiver, denoted by $K_n$, with two vertices and $n$ arrows, all pointing in the same direction. It now follows from Theorem 1.2 in \cite{CC9} that $k(C)^{\GL(\dd)}\simeq k(\module(K_n, (1,1)))^{\GL((1,1))}\simeq k(x_1,x_2,\ldots x_{n-1})$.
\end{proof}

\section{Moduli spaces of modules}\label{Moduli-spaces-sec} Let $A=kQ/I$ be a bound quiver algebra and let $\dd \in \ZZ^{Q_0}_{\geq 0}$ be a dimension vector of $A$. We denote $\GL(\dd)/T_1$ by $\PGL(\dd)$ where $T_1=\{(\lambda \Id_{\dd(i)})_{i \in Q_0} \mid \lambda \in k^*\} \leq \GL(\dd)$. Note that there is a well-defined action of $\PGL(\dd)$ on $\module(A,\dd)$ since $T_1$ acts trivially on $\module(A,\dd)$.

We always identify $K_0(A)$ with the lattice $\ZZ^{Q_0}$ which, in turn, we identify with $\Hom_{\ZZ}(K_0(A), \ZZ)$  via $\theta(\dd)=\sum_{i \in Q_0}\theta(i)\dd(i), \forall \theta \in \ZZ^{Q_0}, \forall \dd \in \ZZ^{Q_0}$. Note that when $A$ is triangular, any integral weight $\theta \in \ZZ^{Q_0}$ can be written as $\langle \dd, \cdot \rangle_{A}$ for a unique vector $\dd \in \ZZ^{Q_0}$. Similarly, $\theta$ can be written as $\langle \cdot, \ee \rangle_{A}$ for a unique vector $\ee \in \ZZ^{Q_0}$.

Note that any $\theta \in \ZZ^{Q_0}$ defines a rational character $\chi_{\theta}:\GL(\dd) \to k^*$ by $$\chi_{\theta}((g(i))_{i \in Q_0})=\prod_{i \in Q_0}(\det g(i))^{\theta(i)}.$$ In this way, we can identify $\ZZ ^{Q_0}$ with the group $X^\star(\GL(\dd))$ of rational characters of $\GL(\dd)$, assuming that $\dd$ is a sincere dimension vector. In general, we have only the natural epimorphism $\ZZ^{Q_0} \to X^*(\GL(\dd))$. 

Now, let $\theta \in \ZZ^{Q_0}$ be an integral weight of $A$. Following King \cite{K}, an $A$-module $M$ is said to be \emph{$\theta$-semi-stable} if $\theta(\ddim M)=0$ and $\theta(\ddim M')\leq 0$ for all submodules $M' \leq M$. We say that $M$ is \emph{$\theta$-stable} if $M$ is non-zero, $\theta(\ddim M)=0$, and $\theta(\ddim M')<0$ for all submodules $\{0\} \neq M' < M$. Now, consider the (possibly empty) open subsets
$$\module(A,\dd)^{ss}_{\theta}=\{M \in \module(A,\dd)\mid M \text{~is~}
\text{$\theta$-semi-stable}\}$$
and $$\module(A,\dd)^s_{\theta}=\{M \in \module(A,\dd)\mid M \text{~is~}
\text{$\theta$-stable}\}$$
of $\dd$-dimensional $\theta$(-semi)-stable $A$-modules.

The weight space of semi-invariants on $\module(A,\dd)$ of weight $n\theta \in \ZZ^{Q_0}$ where $n \in \ZZ_{\geq 0}$ is
$$\SI(A,\dd)_{n\theta}:=\{f \in k[\module(A,\dd)] \mid g\cdot f=(n\theta)(g)f, \forall g \in \GL(\dd)\}.$$ 
Using methods from GIT, King showed in \cite{K} that the projective variety
$$
\M(A,\dd)^{ss}_{\theta}:=\Proj(\bigoplus_{n \geq 0}\SI(A,\dd)_{n\theta})
$$
is a GIT-quotient of $\module(A,\dd)^{ss}_{\theta}$ by the action of $\PGL(\dd)$. We say that $\dd$ is a \emph{$\theta$-semi-stable dimension vector} if $\module(A,\dd)^{ss}_{\theta} \neq \emptyset$.

For an irreducible component $C \subseteq \module(A,\dd)$, we similarly define $C^{ss}_{\theta}, C^s_{\theta}$, $\SI(C)_{n\theta}$, and $\M(C)^{ss}_{\theta}$. 

\subsection{Families of $A$-modules} Let us denote by $\module(A)^{ss}_{\theta}$ the full subcategory of $\module(A)$ consisting of the $\theta$-semi-stable modules. It is easy to see that $\module(A)^{ss}_{\theta}$ is a full exact abelian subcategory of $\module(A)$ which is closed under extensions and whose simple objects are precisely the $\theta$-stable modules. Moreover, $\module(A)^{ss}_{\theta}$ is Artinian and Noetherian, and hence every $\theta$-semi-stable $A$-module has a Jordan-H{\"o}lder filtration in $\module(A)^{ss}_{\theta}$.

Two $\theta$-semi-stable $A$-modules are said to be \emph{$S$-equivalent} if they have the same composition factors in $\module(A)^{ss}_{\theta}$. It was proved in \cite[Proposition 4.2]{K} that the points of $\M(A,\dd)^{ss}_{\theta}$ are in one-to-one correspondence with the $S$-equivalence classes of $\dd$-dimensional $\theta$-semi-stable $A$-modules. 

We now recall the definition of a \emph{family of $A$-modules} over a variety which was introduced in this context by King \cite{K}. Let $Z$ be a (reduced) variety and let $(V_z)_{z \in Z}$ be a collection of $A$-modules parametrized by $Z$. Following the presentation in \cite[Section 6]{DL}, we call $(V_z)_{z \in Z}$ a \emph{family of $A$-modules} if the following two conditions are satisfied:
\begin{enumerate}
\renewcommand{\theenumi}{\roman{enumi}}
\item $(V_z)_{z \in Z}$ is an algebraic vector bundle over $Z$; in particular, the vector spaces $V_z$, $z \in Z$, have the same dimension;
\item for each $a \in A$, the map $z \to a \cdot \Id_{V_z}$ $(z \in Z)$ is a section of the endomorphism bundle $(\End_k(V_z))_{z \in Z}$; in other words, the $A$-module structure on $V_z$ varies algebraically with $z \in Z$.
\end{enumerate}

King showed that $\M(A,\dd)^{ss}_{\theta}$ is a \emph{coarse moduli space} for families of $\dd$-dimensional $\theta$-semi-stable $A$-modules (see \cite[Proposition 5.2]{K}). This essentially says that if $(V_z)_{z \in Z}$ is a family of $\dd$-dimensional $\theta$-semi-stable $A$-modules then the (unique) set-theoretic map $Z \to \M(A,\dd)^{ss}_{\theta}$, which sends each $z \in Z$ to the point representing the $S$-equivalence class of $V_z$, is a morphism of varieties.

\begin{lemma}\label{families-lemma} Let $A$ and $B$ be two bound quiver algebras, $T$ an $A$-$B$-bimodule, $Z$ a variety, and $n$ a positive integer.
\begin{enumerate}
\renewcommand{\theenumi}{\arabic{enumi}}
\item Let $(V_z)_{z \in Z}$ be a family of $A$-modules parametrized by $Z$. Assume that for each $0 \leq l \leq n$, there exists an integer $m_l$ such that $\dim_k \Ext^l_A(T,V_z)=m_l, \forall z \in Z$. Then, $(\Ext_A^n(T,V_z))_{z \in Z}$ is a family of $B$-modules. 
\item Let $(W_z)_{z \in Z}$ be a family of $B$-modules parametrized by $Z$. Assume that for each $0 \leq l \leq n$, there exists an integer $t_l$ such that $\dim_k \Tor^l_B(T,W_z)=t_l, \forall z \in Z$. Then, $(\Tor^n_B(T,W_z))_{z \in Z}$ is a family of $A$-modules. 
\end{enumerate}
\end{lemma}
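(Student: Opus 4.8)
The plan is to reduce both statements to the classical fact that for a finite free resolution the cohomology of the induced complex varies algebraically in families, and to organize the argument around the two conditions (i) and (ii) in King's definition of a family. First I would fix, for part (1), a finite projective resolution of $T$ as a right $A$-module, say $P_\bullet \to T \to 0$ with each $P_j$ a finitely generated projective right $A$-module (this exists since $A$ has finite global dimension, which is in force throughout by the standing assumption). Applying $\Hom_A(-,V_z)$ to this resolution yields, for each $z \in Z$, a cochain complex $\Hom_A(P_\bullet, V_z)$ of $k$-vector spaces, and the key point is that each term $\Hom_A(P_j, V_z) \cong V_z^{\oplus r_j}$ (up to the $e_i$-decomposition of $P_j$) is the fiber of an algebraic vector bundle over $Z$, since $(V_z)_{z\in Z}$ is one; moreover the differentials are sections of the corresponding $\Hom$-bundles because they are built from the right $A$-action on $T$ via the fixed matrices defining $P_\bullet$, together with the algebraically varying $A$-action on $V_z$ guaranteed by condition (ii). Thus $\Hom_A(P_\bullet, V_z)$ is a complex of algebraic vector bundles over $Z$ with bundle maps as differentials.

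Next I would invoke the standard semicontinuity/base-change principle for complexes of vector bundles: if the rank of the cohomology $H^l$ of such a complex is constant in $z$ for $l = n-1, n, n+1$ (equivalently, if the ranks of the differentials $d^{n-1}, d^n$ are constant), then $\bigl(H^n(\Hom_A(P_\bullet,V_z))\bigr)_{z\in Z} = \bigl(\Ext^l_A(T,V_z)\bigr)_{z\in Z}$ is again an algebraic vector bundle over $Z$. The hypothesis $\dim_k \Ext^l_A(T,V_z) = m_l$ for all $0 \le l \le n$ and all $z$ supplies exactly the needed constancy (one can truncate the resolution so that only finitely many terms, up to degree $n+1$, matter, using that $\gldim A < \infty$ — and in fact the statement only asserts the conclusion under this constancy hypothesis). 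This establishes condition (i) for the candidate family $(\Ext^n_A(T,V_z))_{z\in Z}$. For condition (ii), the left $B$-module structure on $\Ext^n_A(T,V_z)$ comes from the right $B$-action on $T$ (hence on $P_\bullet$, after choosing the resolution to consist of $A$-$B$-bimodule projectives, or simply noting the $B$-action is functorial and compatible with the bundle structure); since this action is induced by fixed data on $T$ and $P_\bullet$ and the identifications above are algebraic over $Z$, each $b \in B$ acts as a section of $(\End_k(\Ext^n_A(T,V_z)))_{z\in Z}$. This completes part (1).

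For part (2) I would run the mirror-image argument: take a finite projective resolution $Q_\bullet \to T \to 0$ of $T$ as a \emph{left} $B$-module (using $\gldim B < \infty$; note $B = \End_A(T)^{op}$ inherits finiteness of global dimension in the tilting situations of interest, but in the generality stated one just assumes it or works with a bounded complex replacing $T$), form the complex $Q_\bullet \otimes_B W_z$, observe it is a complex of algebraic vector bundles over $Z$ with bundle-map differentials (now using the left $B$-action on $W_z$ varying algebraically, condition (ii) for the $B$-family), and apply the same base-change lemma: constancy of $\dim_k \Tor^l_B(T,W_z)$ for $0\le l\le n$ forces $\bigl(\Tor^n_B(T,W_z)\bigr)_{z\in Z}$ to be a vector bundle, and the right $A$-action on $T$ endows it with the structure of a family of $A$-modules. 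The main obstacle — really the only non-formal point — is the base-change statement itself: one must be careful that constancy of the cohomology \emph{ranks} (not merely of the alternating sum) in the relevant degrees is what guarantees the cohomology bundle is locally free and commutes with restriction to points, and that the ranks of the individual differentials are then also locally constant. This is the usual argument that over a reduced base a map of vector bundles with constant rank has locally free kernel, image, and cokernel; I would cite it (e.g. from Mumford's \emph{Abelian Varieties} or the treatment in \cite{DL}) rather than reprove it, and the rest is bookkeeping with the functorial module structures.
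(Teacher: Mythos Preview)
Your argument is correct but takes a different route from the paper's. The paper does not choose a projective resolution of $T$; instead it writes down, for each $z$, the Hochschild (bar) complex
\[
0 \longrightarrow \Hom_k(T,V_z) \longrightarrow \Hom_k(A\otimes_k T, V_z) \longrightarrow \Hom_k(A^{\otimes 2}\otimes_k T, V_z) \longrightarrow \cdots
\]
whose cohomology computes $\Ext^{\bullet}_A(T,V_z)$, and dually the bar complex $T\otimes_k B^{\otimes\bullet}\otimes_k W_z$ for Tor. These terms are tautologically algebraic vector bundles over $Z$, and---this is the payoff---each term is already a $B$-module (respectively $A$-module) via the fixed right $B$-action on $T$, so the subbundle/quotient-bundle argument yields a family of $B$-modules without any extra work. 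Both proofs then hinge on the same fact you cite: a morphism of vector bundles of constant rank has kernel and image that are subbundles (the paper cites Le~Potier for this). What your approach buys is familiarity---it is the standard base-change picture from algebraic geometry. What the paper's approach buys is that no resolution needs to be chosen, the $B$-action is visible at the chain level rather than obtained by lifting $r_b:T\to T$ to a chain map up to homotopy, and no finiteness assumption on $\operatorname{gl.dim}A$ or $\operatorname{gl.dim}B$ is ever invoked (your parenthetical about truncation and global dimension is unnecessary even in your argument, since the first $n{+}2$ terms of any projective resolution suffice; the Hochschild complex sidesteps the issue entirely). One small slip: with $T$ an $A$-$B$-bimodule in the usual convention, $T$ is a \emph{left} $A$-module, so your resolution should be by projective left $A$-modules.
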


\begin{remark} We should point out that for $n=1$ this lemma was proved by Domokos and Lenzing in \cite[Lemma 6.3]{DL}. Here, we explain how to prove the general case by working with Hochschild complexes. 
\end{remark}

\begin{proof} In what follows, for a given integer $l \geq 0$, we write $A^l$ and $B^l$ for $\underbrace{A\otimes_k \ldots \otimes_k A}_{l}$ and $\underbrace{B\otimes_k \ldots \otimes_k B}_{l}$, respectively. 

$(1)$ For each $z \in Z$, we consider the Hochschild complex: %of $A$ with values in the $A$-$A$-bimodule $\Hom_{k}(T,V_z)$:
$$
K^*_z:~~~0 \longrightarrow \Hom_k(T,V_z) \buildrel d^0_z \over\longrightarrow \Hom_k(A\otimes_k T, V_z) \buildrel d^1_z \over\longrightarrow \Hom_k(A^2 \otimes_k T, V_z) \longrightarrow \cdots $$
where 
$$
\begin{aligned}
d^l_z(\phi_l)(a_1\otimes \ldots \otimes a_{l+1}\otimes t) &=a_1\phi_l(a_2\otimes \ldots \otimes a_{l+1}\otimes t)\\ & + \sum_{i=1}^l (-1)^i \phi_l(a_1\otimes \ldots \otimes (a_ia_{i+1})\otimes \ldots \otimes t) \\ & +(-1)^{l+1}\phi_l(a_1\otimes \ldots \otimes a_l \otimes (a_{l+1}t)).
\end{aligned}
$$
As $k$ is a commutative field, we know that $H^l(K^*_z)\simeq \Ext_A^l(T,V_z), \forall l \geq 0$; see for example Theorem 8.7.10 and Lemma 9.1.9 in \cite{Wei}. 

It is now easy to see that for each integer $l \geq 0$, $(d^l_z)_{z \in Z}$ is a morphism of vector bundles. Moreover, for each $0 \leq l \leq n$, the maps $d^l_z$, $z \in Z$, have constant rank, and hence the kernel and the image of $(d^l_z)_{z \in Z}$ are subbundles of $(\Hom_k(A^l \otimes_k T, V_z))_{z \in Z}$ and $(\Hom_k(A^{l+1} \otimes_k T, V_z))_{z \in Z}$, respectively (see \cite[Proposition 1.7.2]{LeP}). Since these subbundles are clearly families of $B$-modules, we get that  $(\Ext_A^n(T,V_z))_{z \in Z}$ is indeed a family of $B$-modules.

$(2)$ For this part, we work with the homology of the following complex (see for example \cite[Section 8.7.1]{Wei}):

$$
K^*_z:~~~0 \longleftarrow T\otimes_k W_z \buildrel (d_0)_z \over \longleftarrow T \otimes_k B \otimes_k W_z \buildrel (d_1)_z \over\longleftarrow T \otimes_k B^2 \otimes_k W_z \longleftarrow \cdots $$

As before, the differentials of this complex give rise to morphisms of vector bundles whose kernels and images are families of $A$-modules. From this, one immediately derives the desired claim. 
\end{proof}

%\begin{remark} Note that the construction of the moduli space $\M(A,\dd)^{ss}_{\theta}$ via semi-invariants uses the fact that $A$ is a bound quiver algebra. Let us briefly explain what we mean by $\M(\Lambda,\dd)^{ss}_{\theta}$ when $\Lambda$ is a finite-dimensional algebra which is not necessarilly basic. First of all, the lemma above remains obviously true for $\Lambda$. Now, let $\{P_1, \ldots, P_n\}$ be a complete set of projective indecomposable $\Lambda$-modules. The algebra $\Lambda^b=\End_{\Lambda}(T)^{op}$, where $T=\bigoplus_{i=1}^n P_i$, is called the basic algebra of $\Lambda$. By Gabriel's theorem, we can assume that $\Lambda^b$ is a bound quiver algebra. From Morita theory, we know that the functor $F=\Hom_{\Lambda}(T,\underline{\phantom{\star}}): \module(\Lambda)\to \module(\Lambda^b)$ is an equivalence of categories with quasi-inverse $G=T\otimes_{\Lambda^b}\underline{\phantom{\star}}$. It follows now from lemma above that if $\dd \in K_0(\Lambda)$ is a dimension vector of $\Lambda$ and $\theta \in \Hom_{\ZZ}(K_0(\Lambda),\ZZ)$ is an integral weight then $\M(\Lambda^b, \dd)^{ss}_{\theta}$ is a corse moduli space for families of $\dd$-dimensional $\theta$-semi-stable $\Lambda$-modules. Of course, this projective variety is uniquely defined up to isomorphism.
%\end{remark}

\subsection{Moduli spaces and tilting}\label{moduli-tilting-sec} We now explain how moduli spaces of semi-stable $A$-modules behave under tilting. This was already discussed by Domokos and Lenzing in the context of moduli spaces of modules over canonical algebras (see \cite{DL}). 

Let $T$ be a basic tilting $A$-module and denote $\End_{A}(T)^{op}$ by $B$. The torsion pairs $(\mathcal{T}(T), \mathcal{F}(T))$ in $\module(A)$ induced by $T$ and $(\mathcal{X}(T),\mathcal{Y}(T))$ in $\module(B)$ induced by $D(T):=\Hom_k(T,k)$ are:
\begin{itemize}
\item $\mathcal{T}(\leftsub{A}{T})=\{M \in \module(A) \mid \Ext^{1}_{A}(T,M)=0 \}$; 
\item $\mathcal{F}(\leftsub{A}{T})=\{M \in \module(A)\mid \Hom_A(T,M)=0 \}$;
\item $\mathcal{X}(T_B)=\{N \in \module(B)\mid \Hom_B(N,D(T))=0\}=\{N \in \module(B) \mid T\otimes_B N=0 \}$;
\item $\mathcal{Y}(T_B)=\{N \in \module(B) \mid \Ext^1_{B}(N,D(T))=0\}=\{N \in \module(B) \mid \Tor^1_B(T,N)=0 \}$.
\end{itemize}

The Brenner-Butler Tilting Theorem (see for example \cite{AS-SI-SK}) tells us that the tilting functor $\Hom_A(T,\underline{\phantom{\star}}):\module(A) \to \module(B)$ induces an equivalence of categories between $\mathcal{T}(T)$ and $\mathcal{Y}(T)$ with quasi-inverse $T\otimes_B\underline{\phantom{\star}}$. Furthermore, the functor $\Ext^1_A(T,\underline{\phantom{\star}}):\module(B)\to \module(A)$ induces an equivalence of categories between $\mathcal{F}(T)$ and $\mathcal{X}(T)$ with quasi-inverse $Tor^B_1(T,\underline{\phantom{\star}})$. 

We also have the isometry $u:K_0(A)\to K_0(B)$ defined by $u(\ddim M)=\ddim \Hom_A(T,M)-\ddim \Ext_A^1(T,M)$ for any $A$-module $M$. 

\begin{definition}\label{well-positioned-def} An integral weight $\theta \in \Hom_{\ZZ}(K_0(A), \ZZ)$ is said to be \emph{well-positioned with respect to $T$} if either:
\begin{enumerate}
\renewcommand{\theenumi}{\arabic{enumi}}
\item there are non-zero $\theta$-semi-stable $A$-modules, $\module(A)^{ss}_{\theta} \subseteq \mathcal{T}(T)$, and and $\theta(\ddim M)<0$ for all non-zero modules $M$ in $\mathcal{F}(T)$; or
\item there are non-zero $\theta$-semi-stable $A$-modules, $\module(A)^{ss}_{\theta} \subseteq \mathcal{F}(T)$, and $\theta(\ddim M)>0$ for all non-zero modules $M$ in $\mathcal{T}(T)$.
\end{enumerate}
\end{definition}

Let $\theta$ be an integral weight of $A$ which is well-positioned with respect to $T$. We define $|\theta \circ u^{-1}|$ to be $\theta \circ u^{-1}$ if condition $(1)$ above is satisfied; if condition $(2)$ is satisfied, $|\theta \circ u^{-1}|$ is defined to be $-\theta \circ u^{-1}$.

Now, we are ready to prove Theorem \ref{moduli-tilting-thm}:

\begin{proof}[Proof of Theorem \ref{moduli-tilting-thm}] $(a)$ \emph{Case 1}:  $\module(A)^{ss}_{\theta} \subseteq \mathcal{T}(T)$ and $\theta(\ddim M)<0$ for all non-zero modules $M$ in $\mathcal{F}(T)$. In this case, $\theta'=\theta \circ u^{-1}$ and $F=\Hom_A(T, \underline{\phantom{\star}})$.

Let $M$ be a $\theta$-semi-stable $A$-module. We will show that $N=F(M)$ is $\theta'$-semi-stable. As $M$ is a $\theta$-semi-stable module lying in $\mathcal{T}(T)$, we deduce that $\theta'(\ddim N)=0$. Now, let $N'$ be a submodule of $N$ and let $M' \in \mathcal{T}(T)$ be such that $F(M')\simeq N'$. In particular, we get that $\theta'(\ddim N')=\theta'(u(\ddim M')=\theta(\ddim M')$. If $\phi \in \Hom_A(M',M)$ is the morphism corresponding to the inclusion $N' \hookrightarrow N$ then $\ker(\phi) \in \mathcal{F}(T)$ as $F$ is left exact. Using our assumption on $\theta$, it is now clear that $\theta'(\ddim N')\leq 0$. This shows that $N$ is $\theta'$-semi-stable.

Now, let $\widetilde{N}$ be a $\theta'$-semi-stable $B$-module. First, we claim that $\widetilde{N} \in \mathcal{Y}(T)$. Indeed, let us consider the canonical sequence of $\widetilde{N}$ with respect to $(\mathcal{X}(T), \mathcal{Y}(T))$:
$$
0 \to \Ext^1_{A}(T, \Tor^1_B(T,\widetilde{N})) \to \widetilde{N} \to \Hom_A(T, T \otimes_B \widetilde{N}) \to 0.
$$
If $\widetilde{N}'$ denotes the $B$-module $\Ext^1_{A}(T, \Tor^1_B(T,\widetilde{N}))$ then $\ddim \widetilde{N}'=-u(\ddim \Tor^1_B(T,\widetilde{N}))$, and so  $\theta'(\ddim \widetilde{N}')=-\theta (\ddim \Tor^1_B(T,\widetilde{N}))$. Using again our assumption on $\theta$, we have that $\theta'(\ddim \widetilde{N}')$ is strictly positive unless $ \Tor^1_B(T,\widetilde{N})=\{0\}$. But since $\widetilde{N}$ is $\theta'$-semi-stable, we must have $\Tor^1_B(T,\widetilde{N})=\{0\}$, and hence, $\widetilde{N} \simeq F(\widetilde{M})$ where $\widetilde{M}:=T \otimes_B \widetilde{N} \in \mathcal{T}(T)$.

Next, we show that $\widetilde{M}$ is $\theta$-semi-stable. It is clear that $\theta(\ddim \widetilde{M})=0$. Now, let $\widetilde{M}'$ be a submodule of $\widetilde{M}$ and note that $\coker F(\pi) \in \mathcal{X}(T)$ where $\pi: \widetilde{M} \to \widetilde{M}/\widetilde{M}'$ is the canonical projection. So, there exists an $A$-module $\widetilde{M}''$ in $\mathcal{F}(T)$ such that $\ddim \coker(F(\pi))=\ddim \Ext^1_{A}(T,\widetilde{M}'')=-u(\ddim \widetilde{M}'')$. In particular, we get that $\theta'(\ddim \coker F(\pi))=-\theta(\ddim \widetilde{M}'') \geq 0$, and from this we see that $\theta'(\ddim F(\widetilde{M}/\widetilde{M}'))\geq 0$. But since $\theta'(\ddim F(\widetilde{M}/\widetilde{M}'))=\theta(\ddim \widetilde{M}/\widetilde{M}')$, we conclude that $\theta(\ddim \widetilde{M}')\leq 0$. This proves part $(a)$ in Case 1.

\emph{Case 2}: $\module(A)^{ss}_{\theta} \subseteq \mathcal{F}(T)$ and $\theta(\ddim M)>0$ for all non-zero modules $M$ in $\mathcal{T}(T)$. In this case, $\theta'=-\theta \circ u^{-1}$ and $F=\Ext^1_A(T,\underline{\phantom{\star}})$. The proof in this case is essentially dual to the one above; one simply uses the existence of long exact sequences in (co)homology  along with the fact that the projective dimension of $T$ is at most one.

$(b)$ For this part, we follow closely the arguments in \cite[Section 6]{DL}. First, let us consider the canonical family $(V_{M})_{M \in \module(A,\dd)^{ss}_{\theta}}$ of $\dd$-dimensional $\theta$-semi-stable $A$-modules. By this we simply mean the trivial vector bundle $\module(A,\dd)^{ss}_{\theta}\times V$ where $V=\bigoplus_{i \in Q_0} k^{\dd(i)}$ and, for each $M \in \module(A,\dd)^{ss}_{\theta}$, $V$ is equipped with the $A$-module structure corresponding to $M$. Now, it follows from part $(a)$ that for each $M \in \module(A,\dd)^{ss}_{\theta}$, $F(V_M)$ is a $\dd'$-dimensional $\theta'$-semi-stable $B$-module. Consequently, we can apply Lemma \ref{families-lemma} to conclude that $(F(V_M))_{M \in \module(A,\dd)^{ss}_{\theta}}$ is actually a family of $\dd'$-dimensional $\theta'$-semi-stable $B$-modules. Hence, we get the morphism of varieties  $\phi: \module(A,\dd)^{ss}_{\theta} \to \M(B,\dd')^{ss}_{\theta'}$ that sends $M \in \module(A,\dd)^{ss}_{\theta}$ to the point of $\M(B,\dd')^{ss}_{\theta'}$ corresponding to the $S$-equivalence class of $F(V_M)$. It is clear that $\phi$ is a $\PGL(\dd)$-invariant morphism. From the universal property of the GIT-quotient $\M(A,\dd)^{ss}_{\theta}$, we obtain the  morphism of algebraic varieties $f: \M(A,\dd)^{ss}_{\theta} \to \M(B,\dd')_{\theta'}$ for which $f \circ \pi=\phi$ where $\pi:\module(A,\dd)^{ss}_{\theta}\to \M(A,\dd)^{ss}_{\theta}$ is the quotient morphism. To construct the inverse morphism of $f$, one follows the same arguments as above with the functor $F$ replaced by its quasi-inverse. 
\end{proof}

%\begin{remark}\label{stable-tilting-rmk} Under the assumptions of the theorem above, it is clear that an $A$-module $M$ is $\theta$-stable if and only if $F(M)$ is a $\theta'$-stable $B$-module.
%\end{remark}

\subsection{The theta-stable decomposition for irreducible components}\label{Theta-stable-decomp-irr-comp-sec}
In \cite{DW2}, Derksen and Weyman introduced the notion of $\theta$-stable decomposition for spaces of representations of quivers without relations. In this section, we explain how to extend Theorem 3.20 in \cite{DW2} to quivers with relations.

Let $A=kQ/I$ be a bound quiver algebra, $\dd \in \ZZ^{Q_0}_{\geq 0}$  a dimension vector of $A$, $C\subseteq \module(A,\dd)$ an irreducible component, and  $\theta \in \ZZ^{Q_0}$ an integral weight of $A$. We say that  $C$ is a $\theta$(-semi)-stable irreducible component if $C$ contains a $\theta$(-semi)-stable $A$-module.  A $\theta$-semi-stable irreducible component $C\subseteq \module(A,\dd)$ is said to be \emph{$\theta$-well-behaved} if $\module(A,\dd')$ has a unique $\theta$-stable irreducible component whenever $\dd'$ is the dimension vector of a factor of a Jordan-H{\"o}lder filtration in $\module(A)^{ss}_{\theta}$ of a generic $A$-module in $C$.

\begin{example} If $A$ is a tame quasi-tilted algebra then any $\theta$-semi-stable irreducible component is $\theta$-well-behaved. This is because for any generic root $\dd$ of $A$, $\module(A,\dd)$ has a unique indecomposable irreducible component as shown by Bobi{\'n}ski and Skowro{\'n}ski in \cite{BS1}.
\end{example}

Let $C$ be a $\theta$-well-behaved irreducible component of $\module(A,\dd)$. We say that
$$
C=C_1\pp \ldots \pp C_l
$$
is \emph{the $\theta$-stable decomposition of $C$} if:
\begin{itemize}
\item the $C_i\subseteq \module(A,\dd_i)$, $1 \leq i \leq l$, are $\theta$-stable irreducible components;
\item the generic $A$-module $M$ in $C$ has a finite filtration $0=M_0\subseteq M_1 \subseteq \dots \subseteq M_l=M$ of submodules such that each factor $M_j/M_{j-1}$, $1 \leq j \leq l$, is isomorphic to a $\theta$-stable module in one the $C_1,\ldots, C_l$, and the sequence $(\ddim M_1/M_0, \ldots, \ddim M/M_{l-1})$ is the same as $(\dd_1,\ldots,\dd_l)$ up to permutation.
\end{itemize}

To prove the existence and uniqueness of the $\theta$-stable decomposition of $C$, first note that the irreducible variety $C^{ss}_{\theta}$ is a disjoint union of sets of the form $\mathcal{F}_{(C_i)_{1 \leq i \leq l}}$, where each $\mathcal{F}_{(C_i)_{1 \leq i \leq l}}$ consists of those modules $M \in C$ that have a finite filtration $0=M_0\subseteq M_1 \subseteq \dots \subseteq M_l=M$ of submodules with each factor $M_j/M_{j-1}$ isomorphic to a $\theta$-stable module in one the $C_i$, $1 \leq i \leq l$. (Note that the $\theta$-well-behavedness of $C$ is used to ensure that the union above is indeed disjoint.) Next, it is not difficult to show that each $\mathcal{F}_{(C_i)_{1 \leq i \leq l}}$ is constructible (see for example \cite[Sec. 3]{C-BS}). Hence, there is unique (up to permutation) sequence $(C_i)_{1 \leq i \leq l}$ of $\theta$-stable irreducible components for which $\mathcal{F}_{(C_i)_{1 \leq i \leq l}}$ contains an open and dense subset of $C^{ss}_{\theta}$ (or $C$).

\begin{remark} Let us mention that the notion of $\theta$-stable decomposition of a dimension vector in an irreducible component of a module variety was introduced in \cite[Section 6.2]{CC9}. It serves as useful tool for finding convenient orthogonal exceptional sequences. But in order to understand how weight spaces of semi-invariants behave with respect to such a decomposition, one also needs to be able to keep track of the various $\theta$-stable irreducible components that arise in the decomposition in question. This issue is now addressed in the above notion of $\theta$-stable decomposition of a well-behaved irreducible component. 
\end{remark}

Next, we recall the following useful fact from invariant theory. Let $G$ and $G_1$ be linearly reductive groups with $G_1 \leq G$, $V$ a finite-dimensional rational representation of $G$, and $V_1$ a vector subspace of $V$ invariant under the action of $G_1$. The $G_1$-equivariant inclusion $\tau: V_1 \hookrightarrow V$ descends to a morphism
$$
\psi: V_1//G_1 \to V//G
$$
such that $\psi \circ \pi_1=\pi \circ \tau$ where $\pi:V \twoheadrightarrow V//G$ and $\pi_1:V_1 \twoheadrightarrow V_1//G_1$ are the categorical affine quotient morphisms. We denote the image of the zero vector of $V$ through the two quotient morphisms by the same symbol $0$. Consider the Hilbert's nullcones $\mathcal{N}_{G}(V):=\pi^{-1}(0)$ and $\mathcal{N}_{G_1}(V_1):=\pi_1^{-1}(0)$.

\begin{lemma}\label{finite-morphism-lemma} Keep the same notation as above. If $\psi^{-1}(0)=\{0\}$ then $\psi$ is a finite morphism.
\end{lemma}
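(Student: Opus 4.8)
The plan is to translate the statement into commutative algebra and then apply a graded version of Nakayama's lemma. Write $S := k[V]^{G}$ and $R := k[V_1]^{G_1}$. The morphism $\psi$ is $\Spec$ of the $k$-algebra homomorphism $\phi\colon S \to R$ that sends an invariant $f \in k[V]^{G}$ to its restriction $f|_{V_1}$ (which is $G_1$-invariant because $G_1 \leq G$); indeed, the identity $\psi \circ \pi_1 = \pi \circ \tau$ dualizes to the statement that the restriction map $k[V] \to k[V_1]$ carries $S$ into $R$ via $\phi$. Since $G$ and $G_1$ are linearly reductive, $S$ and $R$ are finitely generated $k$-algebras; since the $G$- and $G_1$-actions are linear, both are $\NN$-graded by total degree, with $S_0 = R_0 = k$, and $\phi$ is a homomorphism of graded rings. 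As $\psi$ is a morphism of affine varieties, to prove it is finite it suffices to show that $R$ is a finitely generated module over $S$ via $\phi$.

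Next I would unwind the hypothesis $\psi^{-1}(0) = \{0\}$. The point $0 \in V//G$ (the image of the origin of $V$) is the unique point at which every invariant of positive degree vanishes, so it corresponds to the graded maximal ideal $\mathfrak{m} := S_{+}$; likewise $0 \in V_1//G_1$ corresponds to $\mathfrak{n} := R_{+}$. The set $\psi^{-1}(0)$ is the vanishing locus in $V_1//G_1$ of the extended ideal $\phi(\mathfrak{m})R$, so by the Nullstellensatz the assumption $\psi^{-1}(0) = \{0\}$ is equivalent to
\[
\sqrt{\phi(S_{+})R} = R_{+} ,
\]
and in particular $R_{+} \subseteq \sqrt{\phi(S_{+})R}$.

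It then remains to invoke the following graded statement: if $\phi\colon S \to R$ is a homomorphism of finitely generated $\NN$-graded $k$-algebras with $S_0 = R_0 = k$ and $R_{+} \subseteq \sqrt{\phi(S_{+})R}$, then $R$ is a finite $S$-module. I would give the short argument: pick homogeneous $k$-algebra generators $y_1, \dots, y_r$ of $R$, all of positive degree. The hypothesis gives an integer $N$ with $y_i^{N} \in \phi(S_{+})R$ for all $i$, so every monomial in the $y_i$ of total degree at least $d_0 := rN\max_i \deg(y_i)$ has some exponent $\geq N$ and hence lies in $\phi(S_{+})R$; thus $\bigoplus_{d \geq d_0} R_d \subseteq \phi(S_{+})R$. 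Since each $R_d$ is finite-dimensional, $M := \bigoplus_{d < d_0} R_d$ is a finite-dimensional $k$-subspace of $R$, and induction on the degree shows $R = \phi(S)\cdot M$: for $d < d_0$ this is clear, while for $d \geq d_0$ a homogeneous element of $R_d$ is a sum $\sum_j \phi(s_j) r_j$ with $s_j \in S_{+}$ homogeneous of positive degree and $r_j$ homogeneous of degree $< d$, to which the inductive hypothesis applies. Hence $R$ is generated as an $S$-module by $M$, so $\psi$ is finite.

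The only genuinely delicate step is the second paragraph: one must be careful to identify the origin of each categorical quotient with the point cut out by the irrelevant ideal of the corresponding graded invariant ring, and to check that the set-theoretic condition $\psi^{-1}(0) = \{0\}$ is exactly the radical identity $\sqrt{\phi(S_{+})R} = R_{+}$ that feeds the graded Nakayama argument; everything else is routine. (Alternatively one could argue geometrically, noting that $\psi$ is equivariant for the natural scaling $k^{*}$-actions on the affine cones $V//G$ and $V_1//G_1$ and that such an equivariant morphism is finite as soon as its fiber over the vertex is a single point, but the commutative-algebra route above is shorter.)
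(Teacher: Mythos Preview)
Your proof is correct and follows essentially the same route as the paper's: both identify the hypothesis $\psi^{-1}(0)=\{0\}$ with the statement that the positive-degree $G$-invariants (restricted to $V_1$) cut out the origin, and deduce module-finiteness from this. The only differences are presentational: the paper chooses explicit homogeneous generators $f_1,\dots,f_n$ of $k[V]^G$, observes that their pullbacks cut out the nullcone $\mathcal{N}_{G_1}(V_1)$ inside $V_1$, and then cites a standard lemma (Derksen--Kemper) for the finiteness; you instead stay on the quotient level, translate the hypothesis into $\sqrt{\phi(S_+)R}=R_+$, and supply the graded Nakayama argument by hand. Your version is self-contained where the paper's is shorter by citation, but the underlying idea is the same.
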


\begin{proof} Let $I$ be the ideal of $K[V]$ generated by all homogeneous $G$-invariants of positive degree. By choosing homogeneous invariants $f_1,\dots, f_n \in K[V]^{G}$ such that $I=(f_1,\dots,f_n)$, Hilbert proved that $K[V]^{G}=K[f_1,\dots,f_n]$ (see for example \cite[Theorem 2.2.10]{DK}).

Now, if $\mathfrak{m}$ denotes the ideal of $K[V]^{G}$ generated by $f_1,\dots f_n$ then the zero set of $\mathfrak{m}$ in $V//G$ is precisely $\{0\}$. From this fact and the assumption that $\psi^{-1}(0)=\{0\}$, we immediately deduce that the zero set of $\psi^{*}(f_1), \dots, \psi^{*}(f_n)$ in $V_1$ is precisely the nullcone $\mathcal{N}_{G_1}(V_1)$. Hence, $K[V_1]^{G_1}$ is a finite module over $K[\psi^{*}(f_1), \dots, \psi^{*}(f_n)]$ (see for example \cite[Lemma 2.4.5]{DK}). The proof now follows.
\end{proof}

With the right definition of $\theta$-stable decomposition in place, the proof of Theorem \ref{theta-stable-decomp-thm} is essentially the same as that of \cite[Theorem 3.20]{DW2}. Nonetheless, we provide below a detailed proof for completeness. In what follows, if $C'$ is a $\theta$-stable irreducible component that occurs in the $\theta$-stable decomposition of $C$ with multiplicity $m$, we denote $\underbrace{C' \pp C' \pp \ldots \pp C'}_{m}$ by $m\cdot C'$. 

\begin{proof}[Proof of Theorem \ref{theta-stable-decomp-thm}] Without loss of generality, we assume that $\theta$ is indivisible, the induced character $\chi_{\theta} \in X^*(\GL(\dd))$ is not trivial, and $Q$ is connected. 

We view $\mathcal{V}$ as a vector subspace of $\module(Q,\dd)$ and denote by $G$ the stabilizer of $\mathcal{V}\subseteq \module(Q,\dd)$ in $G_{\theta}$. It easy to see that $G$ is isomorphic to the intersection of $G_{\theta}$ with
$$
(S_{m_1}\ltimes\GL(\dd_1)^{m_1}) \times \ldots \times (S_{m_n}\ltimes\GL(\dd_n)^{m_n}).
$$
(Here, $S_m$ denotes the symmetric group on $m$ elements.) Let $\psi:\mathcal{V}//G \to \module(Q,\dd)//G_{\theta}$ be the morphism induced by the $G$-equivariant inclusion $\tau:\mathcal{V} \hookrightarrow \module(Q,\dd)$. Since $X$ embeds $G$-equivariantly into $C$, $\psi$ descends to a morphism
$$
\widetilde{\psi}: X//G \to C//G_{\theta}
$$
such that $\widetilde{\psi} \circ \pi_{X}=\pi_C \circ \tau|_{X}$, where $\pi_{X}:X \to X//G$ and $\pi_C:C \to C//G_{\theta}$ are the categorical quotient morphisms. Note that
$$
K[C//G_{\theta}]=\bigoplus_{m\geq 0}\SI(C)_{m\theta},
$$
and
$$
K[X//G]=\bigoplus_{m\geq 0}\bigotimes_{i=1}^n S^{m_i}(\SI(C_i)_{m\theta}),
$$
and moreover, the pullback map $\widetilde{\psi}^{*}$ respects the gradings of the coordinate rings above. In what follows we show that $\widetilde{\psi}^*$ is an isomorphism.

Note that if $M \in \mathcal{V}$ then $M$ is $G$-semi-stable, meaning that $0 \in \overline{GM}$, if and only if the direct summands of $M$ are $\theta$-semi-stable. This implies that $\psi^{-1}(0)=\{0\}$, and so $\psi$ is a finite morphism by Lemma \ref{finite-morphism-lemma}. But since $\widetilde{\psi}$ is the restriction of $\psi$ to $X//G$, we can immediately see that $\widetilde{\psi}$ is a finite morphism, too.

Next, let $M\in C^{ss}_{\theta}$ be a module that has a filtration of the form:
$$
0=M_0\subseteq M_1 \subseteq \dots \subseteq M_l=M,
$$
where the factors $M_i/M_{i-1}$, $1 \leq i \leq l$, are $\theta$-stable and the sequence $(\ddim M_1, \ldots, \ddim M/M_{l-1})$ is the same as $(\dd_1^{m_1},\ldots,\dd_n^{m_n})$ up to permutation. Here, $l:=m_1+\ldots m_n$. Now, let $\widetilde{M} \in X$ be a module isomorphic to $\bigoplus_{i=1}^l M_i/M_{i-1}$. Then, we have that $\widetilde{\psi}(\pi_X(\widetilde{M}))=\pi_C(M)$, and hence $\widetilde{\psi}$ is dominant. Now, denote by $X^0$ the non-empty open subset $(C_{1,\theta}^s)^{m_1} \times \ldots \times (C_{n,\theta}^s)^{m_n}$ of $X$ and note that any point of $X^0$ has its $G_{\theta}$-orbit closed in $C$. This implies that $\pi_C$ is injective on $X^0$, and so the morphism $\widetilde{\psi}$ is injective on $\pi_X(X^0)$; in particular, $\widetilde{\psi}$ is injective on an open and dense subset of $X//G$. It is now clear that $\widetilde{\psi}$ has to be a  birational morphism.

Finally, we know from geometric invariant theory that the affine quotient variety $C//G_{\theta}$ is normal since $C$ is assumed to be a normal variety. It now follows that $\widetilde{\psi}$ is an isomorphism, and this finishes the proof.
\end{proof}

\begin{remark} Keep the same assumptions as in Theorem \ref{theta-stable-decomp-thm}.  If we further assume that $A$ is tame then, for each $1 \leq i \leq n$, the moduli space $\M(C_i)^{ss}_{\theta}$ is of dimension $\dim C_i-\dim \GL(\dd_i)+1 \leq 1$. More precisely, $\M(C_i)^{ss}_{\theta}$ is a curve if, for example, $q_A(\dd_i)=0$ (see \cite[Proposition 1.2]{delaP}). 

Hence, the ``building blocks''  $\M(C_1)^{ss}_{\theta}, \ldots, \M(C_n)^{ss}_{\theta}$ that make up the moduli space $\M(C)^{ss}_{\theta}$ are either points or projective curves in the tame case.
\end{remark}

\section{Tilted algebras}\label{Tilted-algebras-sec}
Recall that a quasi-tilted algebra is a basic and connected finite-dimensional algebra of the form $\End_{\mathcal{H}}(T)^{op}$ where $\mathcal{H}$ is a hereditary category and $T \in \mathcal{H}$ is a tilting object. 

\subsection{Singular moduli spaces of modules for wild tilted algebras}\label{Singular-wild-tilted-sec} Let  $B=\End_A(T)^{op}$ be a wild tilted algebra where $A=kQ$ with $Q$ a wild connected quiver and $T$ is a basic tilting $A$-module. Our goal here is to show that $B$ has singular moduli space of modules. We achieve this by reducing the considerations to the case of wild hereditary algebras via Theorem \ref{moduli-tilting-thm}.

Now, we are ready to prove:

\begin{proposition}\label{singular-moduli-wild-prop} If $B$ is a wild tilted algebra then there exist a generic root $\dd$ of $B$, an indecomposable irreducible component $C$ of $\module(B,\dd)$, and an integral weight $\theta$ of $B$ such that $C^{s}_{\theta} \neq \emptyset$ and the moduli space $\M(C)^{ss}_{\theta}$ is singular.
\end{proposition}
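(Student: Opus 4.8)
The plan is to reduce the wild tilted case to the wild hereditary case by means of Theorem~\ref{moduli-tilting-thm}, and then to exhibit a singular moduli space over a wild hereditary algebra. Write $B=\End_A(T)^{op}$ with $A=kQ$, $Q$ a wild connected quiver, and $T$ a basic tilting $A$-module. Since moduli spaces are (well) understood for wild hereditary $A$, the first task is to produce over $A$ a generic root $\dd$ (equivalently, a Schur root, since $A$ is hereditary), an indecomposable irreducible component $C=\module(A,\dd)$ — irreducibility being automatic because $A$ is hereditary — and an integral weight $\theta$ with $\module(A,\dd)^s_\theta\neq\emptyset$ for which $\M(A,\dd)^{ss}_\theta$ is singular. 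For a wild quiver such examples are classical: take $\dd$ to be a sincere positive-dimensional Schur root and $\theta=\langle\dd',\cdot\rangle_A-\langle\cdot,\dd''\rangle_A$-type weights coming from a $\theta$-stable decomposition $\dd=m_1\dd_1\pp m_2\dd_2$ with $\dim\M$ large, or, more directly, pick a generalized Kronecker quiver $K_r$ with $r\geq 3$ sitting inside $Q$ as a (convex) full subquiver; the moduli spaces $\M(K_r,(1,1))^{ss}_{(1,-1)}=\PP^{r-1}$ are smooth, so one must instead go one dimension deeper, e.g. $\dd=(2,2)$ or a suitable non-prehomogeneous dimension vector, where $\M$ is known to be singular (the relevant computations are in the literature on moduli of Kronecker modules and on the work cited in \cite{CC9}). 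I would isolate the cleanest such $\dd,\theta$ over $A$ as a lemma, stated just before Proposition~\ref{singular-moduli-wild-prop}, so that the proof of the proposition becomes a transport argument.

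Next I would arrange that $\theta$ is well-positioned with respect to $T$ in the sense of Definition~\ref{well-positioned-def}. This is the technical heart of the argument. Given any tilting module $T$ with torsion pair $(\mathcal{T}(T),\mathcal{F}(T))$ in $\module(A)$, one can twist $\theta$ — say replace $\theta$ by $\theta+N\cdot\langle\ddim T,\cdot\rangle_A$ for a suitable large $N$, or more precisely exploit that $\mathcal{T}(T)$ and $\mathcal{F}(T)$ are separated by the linear functional $\theta_T(\underline{\phantom{\star}})$ measuring the proportion of $T$-torsion — to force all $\theta$-semi-stable $A$-modules into $\mathcal{T}(T)$ (or all into $\mathcal{F}(T)$) while keeping the sign condition on the complementary class. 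The point is that $\mathcal{T}(T)$ contains all but finitely many indecomposables (all indecomposables not in $\mathcal{F}(T)$, and $\mathcal{F}(T)$ has only finitely many indecomposables up to iso), so after a suitable twist the $\theta$-semi-stable locus of $\module(A,\dd)$ is unchanged up to isomorphism of the moduli space (semi-stability and the GIT quotient are insensitive to adding a multiple of a fixed weight that vanishes on $\dd$) yet now $\module(A)^{ss}_\theta\subseteq\mathcal{T}(T)$ and $\theta$ is strictly negative on nonzero modules of $\mathcal{F}(T)$. I would carry this out carefully: choose $\dd$ first, then choose $\theta$ with $\theta(\dd)=0$, $\module(A,\dd)^s_\theta\neq\emptyset$, $\M(A,\dd)^{ss}_\theta$ singular, and $\module(A)^{ss}_\theta\subseteq\mathcal{T}(T)$ with $\theta<0$ on $\mathcal{F}(T)\setminus 0$ — the last two conditions achievable by the twisting just described, since twisting does not disturb the first two.

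With $\theta$ well-positioned, Theorem~\ref{moduli-tilting-thm}(b) applies with $F=\Hom_A(T,\underline{\phantom{\star}})$: setting $\dd'=u(\dd)$ and $\theta'=|\theta\circ u^{-1}|=\theta\circ u^{-1}$, the induced map $f:\M(A,\dd)^{ss}_\theta\to\M(B,\dd')^{ss}_{\theta'}$ is an isomorphism of algebraic varieties, whence $\M(B,\dd')^{ss}_{\theta'}$ is singular. It remains to package the output as required by the statement: $\dd'$ is a generic root of $B$ because $F$ carries the $\theta$-stable (hence Schur, hence indecomposable) $A$-module witnessing $\module(A,\dd)^s_\theta\neq\emptyset$ to a $\theta'$-stable $B$-module, so $\ind(B,\dd')$ meets the irreducible component $C$ of $\module(B,\dd')$ containing the image; that $C$ is indecomposable follows since $f$ identifies $\M(C)^{ss}_\theta$ with the singular variety $\M(A,\dd)^{ss}_\theta$ (or, if one prefers, one verifies directly that a generic point of $C$ is indecomposable using $F$ and its quasi-inverse on $\mathcal{T}(T)\cap\module(A,\dd)^{ss}_\theta$); and $C^s_{\theta'}\neq\emptyset$ for the same reason. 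I expect the main obstacle to be precisely the well-positioning step — verifying that the twist can be performed without destroying $\theta$-stability on $\dd$ and without introducing spurious $\theta$-semi-stable modules outside $\mathcal{T}(T)$ — together with checking that the hereditary example one starts from genuinely has a singular moduli space with nonempty stable locus; everything after invoking Theorem~\ref{moduli-tilting-thm} is formal bookkeeping about irreducible components and stability being preserved under the equivalence of categories in part~(a).
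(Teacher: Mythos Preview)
Your overall strategy matches the paper's: produce a singular moduli space over the wild hereditary algebra $A$, show the relevant weight is well-positioned with respect to $T$, and transport via Theorem~\ref{moduli-tilting-thm}. However, the well-positioning step---which you correctly flag as the crux---has a genuine gap.

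Your claim that ``$\mathcal{F}(T)$ has only finitely many indecomposables up to iso'' is false for an arbitrary basic tilting module $T$ over a wild hereditary algebra; it holds precisely when $T$ is preprojective (dually, $\mathcal{T}(T)$ is finite when $T$ is preinjective). The paper handles this by first invoking a structural result of Kerner and Strauss: every wild tilted algebra contains a convex subcategory that is wild \emph{concealed}, so one may assume from the outset that $T$ is preprojective (or preinjective). Without this reduction your argument does not get off the ground, because there is then no reason to expect any twist to push all semi-stables into $\mathcal{T}(T)$.

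Even after reducing to the preprojective case, your proposed twist by $N\langle \ddim T,\cdot\rangle_A$ does not work as stated: this functional does not vanish on $\dd$, so adding it changes $\theta(\dd)$ and in general alters the semi-stable locus; your parenthetical that ``semi-stability and the GIT quotient are insensitive to adding a multiple of a fixed weight that vanishes on $\dd$'' is simply incorrect for weights other than scalar multiples of $\theta$ itself. The paper avoids twisting altogether. It chooses a regular $A$-module $X_0$ with $\ddim X_0$ imaginary non-isotropic and with all $\tau_A^m X_0$ sincere (such $X_0$ exist by a result of Kerner), sets $\dd_0=\ddim X_0$ and $\theta_0=\langle \dd_0,\cdot\rangle_A-\langle\cdot,\dd_0\rangle_A$, and proves directly that $\theta_0(\ddim M)<0$ for every preprojective $M$: if not, one gets $\Ext^1_A(X_0,M)=0$, hence some $\tau_A^{m+1}X_0$ is non-sincere, a contradiction. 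Since $\mathcal{F}(T)$ consists of preprojectives, this gives well-positioning. The singular moduli space is then $\M(A,3\dd_0)^{ss}_{\theta_0}$ (singularity due to Domokos), and the identification of the irreducible component on the $B$-side uses that the $\theta'$-semi-stable $B$-modules lie in $\mathcal{Y}(T)$, hence have projective dimension at most one, so the locus $\module_{\mathcal P}(B,\dd)$ is open and irreducible.
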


\begin{proof} First of all, we know from the main results in \cite{Ker2, Ker3} and \cite{Str} that any wild tilted algebra contains a convex subcategory which is wild concealed (the titling module involved is either preprojective of preinjective). Consequently, we can assume that $B=\End_A(T)^{op}$ where $A=kQ$ with $Q$ a connected  wild quiver and $T$ is a basic preprojective tilting $A$-module. (The case when $T$ is preinjective is dual.) Then, we know that the indecomposable $A$-modules in $\mathcal{F}(T)$ are all preprojective and any regular or preinjective $A$-module belongs to $\mathcal{T}(T)$ (see for example \cite{AS-SI-SK}).

To construct a weight $\theta$ with the desired properties, we begin by choosing a regular $A$-module $X_0$ with the property that all $\tau_{A}^m X$, $m\geq 0$, are sincere regular Schur $A$-modules and $\ddim X_0$ is an imaginary, non-isotropic root of $A$ (see \cite[Proposition 10.2]{Ker1}). Denote the dimension vector of $X_0$ by $\dd_0$ and let $\theta_0$ be the weight $\langle \dd_0, \cdot \rangle_{A}-\langle \cdot , \dd_0 \rangle_A$. Then, $n\dd_0$ is $\theta_0$-stable for all integers $n \in \ZZ_{>0}$ by \cite[Theorem 6.1]{S1} and \cite[Proposition 3.16]{DW2}. 

Next, we show that $\theta_0$ is well-positioned with respect to $T$ which is equivalent to showing that $\theta_0(\ddim M)<0$ for every preprojective $A$-module $M$. Assume to the contrary that there exists a preprojective $A$-module $M$ such that $\langle \ddim X, \ddim M\rangle \geq \langle \ddim M, \ddim X \rangle$. But this is equivalent to $-\dim_k \Ext_A^1(X,M)\geq \dim_k \Hom_A(M,X)$, and so $\dim_k\Ext^1_A(X,M)=0$. Writing $M=\tau_A^{-m} P_i$ for uniquely determined $m \in \ZZ_{\geq 0}$ and $i \in Q_0$, we get that $\tau_A^{m+1} X(i)=\{0\}$ which contradicts the fact that $\tau_A^{m+1}X$ is sincere. So, we conclude that $\theta_0$ is well-positioned with respect to $T$.

Let $u:K_0(A)\to K_0(B)$ be the isometry induced by $T$ and let $\theta=\theta_0\circ u^{-1}$. We claim that $C:=\overline{\module(B,\dd)^{ss}_{\theta}}$ is an irreducible component of $\module(B,\dd)$ where $\dd:=u(n\dd_0)$ and $n \in \ZZ_{>0}$. Indeed, it follows from  the proof of Theorem \ref{moduli-tilting-thm}{(1)} that the $\theta$-semi-stable $B$-modules all lie in $\mathcal{Y}(T)$, and hence their projective dimension is at most one as $A$ is hereditary. Consequently, the subset $\module_{\mathcal P}(B,\dd)$ of $\module(B,\dd)$ consisting of all modules of projective dimension at most one is non-empty, and this implies that $\module_{\mathcal P}(B,\dd)$ is an irreducible open subset of $\module(B,\dd)$ (see \cite[Proposition 3.1]{BarSch}). This immediately implies our claim. Furthermore, as $n\dd_0$ is $\theta_0$-stable, we deduce from the proof of Theorem \ref{moduli-tilting-thm}{(1)} that $\dd$ is $\theta$-stable, i.e. $C^s_{\theta} \neq \emptyset$.

Using Theorem \ref{moduli-tilting-thm}{(2)} again, we get that $\M(C)^{ss}_{\theta} \simeq \M(A,n\dd_0)^{ss}_{\theta_0}$ which is known to be singular for $n=3$ (see for example \cite{Domo2}).
\end{proof}

\begin{proof}[Proof of Proposition \ref{strongly-simply-connected-prop}] Assume to the contrary that $A$ is wild. It then follows from Corollary 1 in \cite{BruPenSko} that $A$ contains a convex hypercritical algebra $B$. But then Proposition \ref{singular-moduli-wild-prop} provides us with a singular moduli space of $B$-modules which contradicts our assumption on the moduli spaces of modules for $A$.
\end{proof}

\begin{remark}\label{Jerzy-conj} In the recent paper \cite{BruPenSko}, Br{\"u}stle, de la Pe{\~n}a, and Skowro{\'n}ski have proved that for a tame strongly simply connected algebra $A$, the convex hull of any indecomposable $A$-module inside $A$ is a tame tilted algebra, or a coil algebra, or a $\mathbb{D}$-algebra (see \cite[Corollary 5]{BruPenSko}). Hence, to prove the analogue of Theorem \ref{quasi-tilted-rationalinv-thm} for strongly simply connected algebras, which was conjectured to hold true by Weyman, it remains to study the geometry of modules over coil algebras and $\mathbb{D}$-algebras. We plan to address these issues in future work.
\end{remark}

\subsection{Rational and GIT quotient varieties of modules for tame quasi-tilted algebras}\label{Quotient-var-tame-quasi-tilted-sec} In what follows, we review some important facts about the geometry of modules over quasi-tilted algebras which are due to Bobi{\'n}ski and Skowro{\'n}ski. 

By a \emph{root} of a quasi-tilted algebra $A$, we simply mean the dimension vector of an indecomposable $A$-module. We say that a root $\dd$ of $A$ is \emph{real} if $q_{A}(\dd)=1$. We call a root $\dd$ of $A$ \emph{isotropic} if $q_{A}(\dd)=0$. If $\dd$ is an isotropic generic root of $A$, we call the indecomposable irreducible components of $\module(A,\dd)$ isotropic, too.

Now, we can state the following important result; see Corollaries 3 and 2.5, and Proposition 2.3 in \cite{BS1}:

\begin{theorem}\label{BS-tame-quasi-tilted-thm} Let $A$ be a tame quasi-tilted algebra and let $\dd$ be a generic  root of $A$. Then, $\dd$ is a Schur root with $q_A(\dd) \in \{0,1\}$. More precisely, the following statements hold true:
\begin{enumerate}
\renewcommand{\theenumi}{\arabic{enumi}}
\item if $q_{A}(\dd)=1$ then there exists a unique, up to isomorphism, $\dd$-dimensional indecomposable $A$-module $M$ which is, in fact, exceptional; if this is the case, then $\overline{\GL(\dd)M}$ is the unique indecomposable irreducible component of $\module(A,\dd)$; 

\item if $q_{A}(\dd)=0$ then the support of $\dd$ is a tame concealed or a tubular convex subcategory of $A$. Furthermore, $\module(A,\dd)$ is a normal variety. 
\end{enumerate}
\end{theorem}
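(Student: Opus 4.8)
The plan is to prove Theorem \ref{BS-tame-quasi-tilted-thm} not by re-deriving the structure theory from scratch—that work is due to Bobi\'nski and Skowro\'nski in \cite{BS1}—but by assembling the cited results into the stated package, adding whatever bookkeeping is needed to pass from their formulation to ours. First I would recall that, by the Tame-Wild Dichotomy and the classification of tame quasi-tilted algebras, $A$ is either tilted of tame type or a tame semiregular branch enlargement of a tame concealed algebra (equivalently, the support of any indecomposable is a tame concealed or tubular convex subcategory). The key geometric input is that, for a tame quasi-tilted algebra, the module variety $\module(A,\dd)$ of a generic root $\dd$ has a unique indecomposable irreducible component $C$, and the Tits form $q_A$ controls its dimension: $\dim C = \dim \GL(\dd) - q_A(\dd)$, so $q_A(\dd)\ge 0$ forces $q_A(\dd)\in\{0,1\}$ because an indecomposable component cannot have $\dim \GL(\dd)-\dim C \ge 2$ in the tame case (this is exactly where tameness enters, via de la Pe\~na's estimates \cite[Proposition 1.2]{delaP}). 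That $\dd$ is a Schur root follows because in the tame case the generic endomorphism algebra of an indecomposable component is either $k$ (when $q_A(\dd)=1$) or, generically on the component, still has a Schur module present—here I would invoke the relevant corollary of \cite{BS1} directly.

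Next I would split into the two cases. For $q_A(\dd)=1$: a real Schur root of a tame quasi-tilted algebra supports a unique indecomposable, which is exceptional, and its $\GL(\dd)$-orbit is dense in the unique indecomposable component; since the orbit is open (rigidity: $\Ext^1_A(M,M)=0$ forces the orbit to be open by the standard dimension count $\dim \GL(\dd)M = \dim\GL(\dd) - \dim\End_A(M) + \dim\Hom_A(M,M)$ together with $q_A(\dd)=1$), its closure $\overline{\GL(\dd)M}$ is precisely that component. This is essentially a citation to Corollary 3 of \cite{BS1}. For $q_A(\dd)=0$: here I would cite that an isotropic generic root forces the support of $\dd$ to be a tame concealed or tubular convex subcategory $B$ of $A$—this is the structural heart of \cite{BS1}, Proposition 2.3—and then the normality of $\module(A,\dd)$ is obtained because, after passing to the support, $\module(B,\dd)$ is (up to the tubular/tame concealed reduction) governed by a tame concealed algebra whose module varieties in isotropic roots are known to be normal; one transports normality back to $\module(A,\dd)$ since adding vertices outside the support only adds affine-space factors. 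I would make explicit that the relevant closure $\overline{\module(A,\dd)^{ss}_\theta}$ or the component itself is the image of a fibration over an affine line with fibers isomorphic to orbit closures, which are normal.

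The step I expect to be the main obstacle is the normality claim in case (2), because it is the only assertion that is genuinely geometric rather than combinatorial, and because the reduction to the support subcategory requires care: one must check that restricting along an idempotent $e$ with $eAe = B$ identifies $\module(A,\dd)$ with a vector-bundle-like extension of $\module(B,\dd|_{\supp})$, preserving normality, and that the tubular/tame concealed case indeed yields normal varieties (for tubular algebras the isotropic-root module varieties are, generically, closures of families parametrized by $\mathbb P^1$, which one shows are normal via their description as homogeneous fibrations or via Serre's criterion $R_1+S_2$). In the write-up I would handle this by quoting Corollaries 2 and 3 and Proposition 2.3 of \cite{BS1} essentially verbatim for the hard analytic content, and supplying only the short argument that real generic roots have dense exceptional orbit and that the whole statement is insensitive to the non-support vertices. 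Thus the proof is short: \emph{cite \cite{BS1} for the structural and normality statements, and observe that the Tits-form dichotomy $q_A(\dd)\in\{0,1\}$ plus Schur-ness is immediate from the dimension formula for indecomposable components in the tame case.}
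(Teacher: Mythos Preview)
Your proposal is essentially the same as the paper's treatment: the paper does not prove this theorem at all but simply states it as a quotation of results from \cite{BS1} (specifically Corollaries 3 and 2.5 and Proposition 2.3), and your write-up likewise recognizes that the substantive content is due to Bobi\'nski--Skowro\'nski and proposes only light bookkeeping around the citation. The additional sketching you provide (dimension formula, orbit-closure argument, support reduction for normality) is more than the paper offers, but it is consistent with the cited source and not a different approach.
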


We will also need the following result from \cite{CC9}:

\begin{proposition} \label{exceptional-seq-qtilted-prop} Let $A$ be a tame concealed or a tubular algebra, and let $\dd$ be an isotropic Schur root of $A$. Then, there exists a short orthogonal exceptional sequence $\mathcal{E}=(E_1,E_2)$ with $\dim_k \Ext_A^1(E_2,E_1)=2$, $\Ext_A^2(E_2,E_1)=0$, and such that the generic module $M$ in $\module(A,\dd)$ fits into a short exact sequence of the form:
$$
0\to E_1\to M \to E_2 \to 0.
$$
 \end{proposition}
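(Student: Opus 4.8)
The plan is to produce the orthogonal exceptional pair $\mathcal{E}=(E_1,E_2)$ directly from the structure of the module category of a tame concealed or tubular algebra $A$, exploiting that an isotropic Schur root $\dd$ sits ``on the boundary'' of the additive region governed by a sincere tube or a separating family of tubes. First I would reduce to the case where $\dd$ is sincere and $A$ itself is tame concealed: by Theorem~\ref{BS-tame-quasi-tilted-thm}(2) the support of $\dd$ is a tame concealed or tubular convex subcategory, and a short orthogonal exceptional sequence in the subcategory extends (by zero outside the support) to one in $A$ with the same Ext and Hom data, so no generality is lost. For the tubular case one restricts attention to one of the tubular families $\mathcal{T}^q$ of some slope $q$ where $\dd$ lives; since each such family, together with the appropriate torsion pair, behaves like the regular part of a tame concealed algebra, the construction there is parallel.

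The key geometric input is that a generic $\dd$-dimensional $A$-module $M$ with $q_A(\dd)=0$ is (up to the usual open-density statements) a module lying in a standard stable tube of rank $r$, and the isotropic condition forces $\ddim M$ to be $h$ times the dimension vector of the ``section'' of the tube, i.e. $\dd = h\,\delta_{\mathcal{T}}$ where $\delta_{\mathcal{T}}$ is the minimal positive imaginary radical vector of the tube and $h\ge 1$; the generic $M$ is then a quasi-simple-regular or, more precisely, a module of regular length $h r$ which is ``generic'' in the sense of having trivial endomorphisms (it is a Schur root by the theorem). The next step is to pick a quasi-simple $S$ at the mouth of this tube so that the wing determined by $S$ of quasi-length $hr-1$ gives an exceptional module $E_1$, and the complementary quasi-simple gives $E_2$ — concretely, take $E_1$ to be the unique indecomposable regular module with regular socle $S$ and regular length $\ell_1$, and $E_2$ the one with regular socle $\tau S$ (or top $S$) and length $\ell_2$, with $\ell_1+\ell_2$ and the socle/top arranged so that $\ddim E_1+\ddim E_2=\dd$ and there is a nonsplit extension $0\to E_1\to M\to E_2\to 0$ realizing the generic $M$. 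Both $E_1,E_2$ are exceptional (regular exceptional modules in a tube are the ``thin in the tube'' ones and have no self-extensions), $\Hom_A(E_2,E_1)=0$ and $\Ext^l_A(E_1,E_2)=0$ for all $l$ by the standard orthogonality inside a tube (they sit in disjoint wings in the appropriate order), and $\Ext^2_A=0$ because $A$ is either hereditary-like (tame concealed, $\mathrm{gldim}\le 2$ with the relevant Ext$^2$ vanishing between these regular modules) or tubular (gldim $\le 2$, and regular modules in a single tube have vanishing Ext$^2$ among them); this last vanishing I would cite from the homological description of tame concealed/tubular algebras rather than recompute. Finally, $\dim_k\Ext^1_A(E_2,E_1)=2$ is forced: the Euler form gives $\langle \ddim E_2,\ddim E_1\rangle = \dim\Hom - \dim\Ext^1 + \dim\Ext^2 = -\dim_k\Ext^1_A(E_2,E_1)$, and a direct computation with $\langle\dd,\dd\rangle_A = q_A(\dd)=0$ together with $\dd=\ddim E_1+\ddim E_2$, $\langle\ddim E_1,\ddim E_1\rangle=\langle\ddim E_2,\ddim E_2\rangle=1$, and the one-sided orthogonality $\langle\ddim E_1,\ddim E_2\rangle = \dim\Hom_A(E_1,E_2)-\dim\Ext^1_A(E_1,E_2)+\dim\Ext^2_A(E_1,E_2)$ pins down $\dim_k\Ext^1_A(E_2,E_1)=2$ once we know the pair is genuinely orthogonal in the exceptional-sequence sense and the total form vanishes.

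The main obstacle I anticipate is not any single inequality but assembling the correct $E_1,E_2$ inside the tube so that simultaneously (i) $(E_1,E_2)$ is an \emph{orthogonal} exceptional sequence in the strict sense of Section~\ref{excep-ratio-inv-sec} (this requires choosing the order of the two wings correctly, which depends on the orientation conventions for $\tau$ and the direction of maps in the tube), (ii) the dimension vectors add up to exactly $\dd$ and not merely to a positive multiple, and (iii) the generic $\dd$-dimensional module really is an extension of $E_2$ by $E_1$ (rather than the other way around or a non-generic degeneration). Handling (iii) cleanly is where I would lean on Bobi\'nski--Skowro\'nski's description of $\module(A,\dd)$ as a normal variety with a dense orbit-closure structure controlled by the tube, possibly combined with a dimension count showing that the image of the extension map $\Ext^1_A(E_2,E_1)\dashrightarrow \module(A,\dd)$ (after choosing bases) dominates $C$. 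Once the tube-combinatorics is set up correctly, conditions (i) and (ii) and the $\Ext^2$-vanishing are essentially bookkeeping, and the value $2$ for $\dim_k\Ext^1_A(E_2,E_1)$ drops out of the Euler-form identity as sketched above.
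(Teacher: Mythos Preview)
Your plan has a genuine gap: the pair $(E_1,E_2)$ cannot be taken as complementary wings inside a single tube. Two obstructions arise. First, the generic $\dd$-dimensional module $M$ lies in a \emph{homogeneous} tube, which contains no exceptional modules whatsoever; already for the Kronecker algebra (the simplest tame concealed case, $\dd=(1,1)$) every regular indecomposable has a self-extension, so the required $E_1,E_2$ are the simple injective and the simple projective, neither of them regular. Second, even if you move to a non-homogeneous tube of rank $r\geq 2$ and choose complementary wings $E_1,E_2$ with $\ddim E_1+\ddim E_2=\dd$, the orthogonality condition~(2) in Section~\ref{excep-ratio-inv-sec} fails: the tube, as an exact extension-closed subcategory, is the category of nilpotent representations of an oriented $r$-cycle, and there one has $\Hom_A(E_1,E_2)=\Hom_A(E_2,E_1)=0$ while $\dim_k\Ext^1_A(E_1,E_2)=\dim_k\Ext^1_A(E_2,E_1)=1$. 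Thus $\langle\ddim E_1,\ddim E_2\rangle_A=\langle\ddim E_2,\ddim E_1\rangle_A=-1$, and your Euler-form identity $0=q_A(\dd)=2+\langle\ddim E_1,\ddim E_2\rangle_A+\langle\ddim E_2,\ddim E_1\rangle_A$ is satisfied as $(-1)+(-1)$ rather than $0+(-2)$. So $\Ext^1_A(E_1,E_2)\neq 0$, and no ordering of the two wings gives an orthogonal exceptional sequence. Your closing paragraph correctly flags condition~(i) as the obstacle, but it is not a bookkeeping issue---it is impossible within a tube. The Euler-form computation you sketch is also circular as stated: it yields $\dim_k\Ext^1_A(E_2,E_1)=2$ only after assuming $\langle\ddim E_1,\ddim E_2\rangle_A=0$, which is precisely the orthogonality you have not established.

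The paper does not argue this proposition in place; it refers to \cite[Proposition~6.7]{CC9} for tame canonical algebras and to \cite{CC12} for the general case. Judging from the remark in Section~\ref{Theta-stable-decomp-irr-comp-sec} that $\theta$-stable decompositions ``serve as a useful tool for finding convenient orthogonal exceptional sequences'', the intended mechanism is a perturbed-stability argument: deform the natural weight so that the generic $\theta$-polystable $\dd$-dimensional module acquires a Jordan--H\"older filtration of length two in $\module(A)^{ss}_\theta$, and take $E_1,E_2$ to be the two stable factors. At least one of these factors will lie outside the regular part. If you prefer to stay close to your tube picture, the repair is to first pass to a perpendicular category (or apply a tilting functor) to reduce to the Kronecker quiver, where $(E_1,E_2)=(S_2,S_1)$ is the obvious pair, rather than trying to locate both $E_i$ inside one tube.
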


\begin{remark} We should point out that this proposition has been proved for tame canonical algebras in \cite[Proposition 6.7]{CC9}. But the exact same arguments work for arbitrary tame concealed algebras and for tubular algebras (see for example \cite{CC12}).
\end{remark}

Now, we are ready to prove:

\begin{proposition}\label{qtilted-prop} \begin{enumerate} Let $A$ be a quasi-tilted algebra.
\renewcommand{\theenumi}{\arabic{enumi}}
\item The following conditions are equivalent:
\begin{enumerate}
\renewcommand{\theenumi}{\roman{enumi}}
\item $A$ is tame;

\item  for each generic root $\dd$ of $A$ and each indecomposable irreducible component $C$ of $\module(A,\dd)$, $k(C)^{\GL(\dd)} \simeq k$ or $k(x)$. 
\end{enumerate}
\item Assume that $A$ is tame and let $\dd$ be an isotropic root of $A$. Then, $\M(\module(A,\dd))^{ss}_{\theta}$ is a product of projective spaces for every integral weight $\theta$ of $A$. 
\end{enumerate}
\end{proposition}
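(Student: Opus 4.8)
My plan is to treat the two equivalences in $(1)$ separately and then reduce $(2)$ to the $\theta$-stable decomposition. Throughout I use the standard fact that if $C\subseteq\module(A,\dd)$ is an irreducible component with $C^s_\theta\neq\emptyset$, then $C^s_\theta$ is a dense open subset of $C$ on which the GIT quotient is geometric, so $k\bigl(\M(C)^{ss}_\theta\bigr)=k(C)^{\GL(\dd)}$. For $(1)$, the implication $(i)\Rightarrow(ii)$: if $A$ is tame, let $\dd$ be a generic root and $C$ the unique indecomposable irreducible component of $\module(A,\dd)$ (Bobi\'nski-Skowro\'nski). By Theorem \ref{BS-tame-quasi-tilted-thm}, $\dd$ is a Schur root with $q_A(\dd)\in\{0,1\}$. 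If $q_A(\dd)=1$, then $C$ is the closure of a single $\GL(\dd)$-orbit, so $k(C)^{\GL(\dd)}=k$. If $q_A(\dd)=0$, then, replacing $A$ by the (convex) support subcategory of $\dd$ — which changes neither $\module(A,\dd)$ nor $k(C)^{\GL(\dd)}$ — we may assume $A$ is tame concealed or tubular and $\dd$ is a primitive isotropic Schur root; Proposition \ref{exceptional-seq-qtilted-prop} then provides an orthogonal exceptional sequence $\mathcal E=(E_1,E_2)$ with $\dd=\ddim E_1+\ddim E_2$, $\dim_k\Ext^1_A(E_2,E_1)=2$, $\Ext^2_A(E_2,E_1)=0$, and with a generic module of $C$ sitting in $0\to E_1\to M\to E_2\to 0$, so $\filt_{\mathcal E}(\dd)\cap C\neq\emptyset$ and Proposition \ref{rational-inv-quiverel-prop} yields $k(C)^{\GL(\dd)}\simeq k(x)$.

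For $(1)$, the implication $(ii)\Rightarrow(i)$ I prove in the contrapositive. Assume $A$ is wild. A quasi-tilted algebra is tilted or quasi-tilted of canonical type, and in either case a wild one contains a convex subcategory $B\cong\End_\Lambda(T)^{op}$ with $\Lambda$ wild hereditary or wild canonical and $T$ a preprojective or preinjective tilting $\Lambda$-module. Running the construction in the proof of Proposition \ref{singular-moduli-wild-prop}, choose a weight $\theta_0$ of $\Lambda$ that is well-positioned with respect to $T$ and a vector $\dd_0$ with $q_\Lambda(\dd_0)<0$ for which $3\dd_0$ is $\theta_0$-stable. By Theorem \ref{moduli-tilting-thm} there is an indecomposable irreducible component $C\subseteq\module(B,\dd')$ with $\dd'$ being $\theta$-stable for the transported weight $\theta$ and $\M(C)^{ss}_\theta\cong\M(\Lambda,3\dd_0)^{ss}_{\theta_0}$; hence $k(C)^{\GL(\dd')}\cong k\bigl(\M(\Lambda,3\dd_0)^{ss}_{\theta_0}\bigr)$, which is not isomorphic to $k$ or $k(x)$ by \cite{CC9} (respectively \cite{Domo2}) since $\Lambda$ is wild. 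As $B$ is convex in $A$ and $\dd'$ is supported on $B$, $C$ is also an indecomposable irreducible component of $\module(A,\dd')$, so $(ii)$ fails.

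For $(2)$, let $A$ be tame quasi-tilted and $\dd$ an isotropic root; we may assume $\module(A,\dd)^{ss}_\theta\neq\emptyset$, and, replacing $A$ by the support subcategory of $\dd$, that $A$ is tame concealed or tubular, $\dd=m\dd_0$ for a primitive isotropic Schur root $\dd_0$, and (Bobi\'nski-Skowro\'nski) that $C:=\module(A,\dd)$ is irreducible and normal. Since every generic root of $A$ has a unique indecomposable irreducible component, $C$ is $\theta$-well-behaved, and hypothesis $(1)$ of Theorem \ref{theta-stable-decomp-thm} holds automatically because direct sums of $A$-modules are $A$-modules. Hence, with $C=m_1\cdot C_1\pp\cdots\pp m_n\cdot C_n$ the $\theta$-stable decomposition,
$$\M(C)^{ss}_\theta\cong S^{m_1}\bigl(\M(C_1)^{ss}_\theta\bigr)\times\cdots\times S^{m_n}\bigl(\M(C_n)^{ss}_\theta\bigr).$$
Each $C_i$ is $\theta$-stable, hence indecomposable, so $\dd_i$ is a generic root with $q_A(\dd_i)\in\{0,1\}$. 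If $q_A(\dd_i)=1$, then $C_i$ is a single orbit closure and $\M(C_i)^{ss}_\theta$ is a point. If $q_A(\dd_i)=0$, then $\dd_i$ is a primitive isotropic Schur root and Propositions \ref{exceptional-seq-qtilted-prop} and \ref{rational-inv-quiverel-prop} give $k(C_i)^{\GL(\dd_i)}\simeq k(x)$; as $C_i^s_\theta\neq\emptyset$ this equals $k\bigl(\M(C_i)^{ss}_\theta\bigr)$, so $\M(C_i)^{ss}_\theta$ is a projective curve with function field $k(x)$, and since it is a GIT quotient of the normal variety $C_i$ it is normal, hence isomorphic to $\PP^1$. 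Finally $S^{m}(\PP^1)\cong\PP^{m}$ and $S^{m}(\mathrm{pt})=\mathrm{pt}$, so $\M(C)^{ss}_\theta$ is a product of projective spaces.

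The main obstacles are twofold. First, in part $(2)$ the applicability of Theorem \ref{theta-stable-decomp-thm} rests on the normality (and irreducibility) of the module varieties $\module(A,\dd)$ for $A$ tame concealed or tubular and $\dd$ isotropic, which is the deep geometric input of Bobi\'nski-Skowro\'nski; once this is granted, the rest of $(2)$ is bookkeeping with symmetric products. Second, in part $(1)$, $(ii)\Rightarrow(i)$, one needs both the structural reduction of an arbitrary wild quasi-tilted algebra to a wild hereditary or wild canonical algebra through a convex concealed subcategory, and the transport of the entire field of rational invariants (not merely of moduli spaces) along the tilting equivalence of Theorem \ref{moduli-tilting-thm} — this is precisely where the identity $k(\M(C)^{ss}_\theta)=k(C)^{\GL(\dd)}$ at $\theta$-stable dimension vectors does the work.
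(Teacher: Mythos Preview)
Your treatment of $(1)(i)\Rightarrow(ii)$ and of part $(2)$ matches the paper's proof essentially step for step: the case split via Theorem~\ref{BS-tame-quasi-tilted-thm}, the appeal to Propositions~\ref{exceptional-seq-qtilted-prop} and~\ref{rational-inv-quiverel-prop} in the isotropic case, and the use of normality, $\theta$-well-behavedness, and Theorem~\ref{theta-stable-decomp-thm} together with $S^m(\mathbb{P}^1)\cong\mathbb{P}^m$ are exactly what the paper does. Your handling of the possible real $\theta$-stable factors in $(2)$ is in fact slightly more explicit than the paper's.

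The difference---and the gap---is in $(1)(ii)\Rightarrow(i)$. The paper disposes of this direction by citing \cite[Proposition~4.6]{CC9}; it does \emph{not} run anything like your contrapositive argument through tilting and moduli. Your route works when $A$ is wild \emph{tilted}, because then Proposition~\ref{singular-moduli-wild-prop} applies verbatim. But for $A$ wild quasi-tilted of canonical type you invoke ``the construction in the proof of Proposition~\ref{singular-moduli-wild-prop}'' with $\Lambda$ wild canonical, and that construction is not available here: the choice of the regular Schur module $X_0$ with all $\tau^m X_0$ sincere comes from Kerner's results on wild \emph{quivers}, the verification that $\theta_0$ is well-positioned uses the hereditary Auslander--Reiten calculus, and the irreducibility of $C=\overline{\module(B,\dd')^{ss}_\theta}$ uses that modules in $\mathcal{Y}(T)$ have projective dimension $\leq 1$ because $A$ is hereditary. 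None of these steps is established in the paper for a canonical base; indeed the closing remark of Section~\ref{Tilted-algebras-sec} explicitly flags the analogue of Proposition~\ref{singular-moduli-wild-prop} for wild canonical algebras as future work. So as written your argument for $(ii)\Rightarrow(i)$ covers only the tilted case and leaves the canonical-type case open. Either cite \cite[Proposition~4.6]{CC9} as the paper does, or supply the missing canonical-algebra analogues.
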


\begin{proof} $(1)$ The implication $(b) \Longrightarrow (a)$ has been already proved in \cite[Proposition 4.6]{CC9}. 

Now, let us assume that $A$ is tame and let $\dd$ be a generic root of $A$. We know from Theorem \ref{BS-tame-quasi-tilted-thm} that $\dd$ is a Schur root and  $\module(A,\dd)$ has a unique indecomposable irreducible component, call it $C$.

If $q_{A}(\dd)=1$ then $k(C)^{\GL(\dd)}\simeq k$ since  $C$ is just the closure of the $\GL(\dd)$-orbit of the $\dd$-dimensional exceptional $A$-module. 

It remains to look into the case when $\dd$ is an isotropic Schur root of $A$. In this case, we simply use Proposition \ref{exceptional-seq-qtilted-prop} and Proposition  \ref{rational-inv-quiverel-prop} to conclude that $k(C)^{\GL(\dd)}\simeq k(x)$.

$(2)$ We know that $\module(A,\dd)$ is normal by Corollary 3 in \cite{BS1}. Now, let  $\theta$ be an integral weight for which $\M(A,\dd)^{ss}_{\theta} \neq \emptyset$ and note that $\module(A,\dd)$ is $\theta$-well-behaved by Theorem \ref{BS-tame-quasi-tilted-thm}. Let $C_1,\ldots, C_n$ be the pairwise distinct isotropic indecomposable irreducible components that occur in the $\theta$-stable decomposition of $\module(A,\dd)$ and denote by $m_1,\ldots, m_n$ their multiplicities. It now follows from Theorem \ref{theta-stable-decomp-thm} that:
$$
\mathcal{M}(A,\dd)^{ss}_{\theta} \cong \prod_{i=1}^n S^{m_i}(\mathcal{M}(C_i)^{ss}_{\theta}).
$$

But, for each $1 \leq i \leq n$, $\mathcal{M}(C_i)^{ss}_{\theta}$ is a projective curve which is also: (i) normal as $C_i$ is normal by Theorem \ref{BS-tame-quasi-tilted-thm}{(2)}, and (ii) rational as proved in part $(1)$. Consequently, $\mathcal{M}(C_i)^{ss}_{\theta} \simeq \mathbb{P}^1$, $\forall 1 \leq i \leq n$, and hence $\mathcal{M}(A,\dd)^{ss}_{\theta} \cong \prod_{i=1}^n \mathbb{P}^{m_i}$. 
\end{proof}

\begin{remark} Let $A$ be a tame quasi-tilted algebra, $\dd$ a root of $A$, $C \subseteq \module(A,\dd)$ an irreducible component, and $\theta$ an integral weight of $A$ such that $C^{s}_{\theta} \neq \emptyset$. Then, the proposition above tells us that $\M(C)^{ss}_{\theta}$ is either a point or just $\mathbb P^1$. 
\end{remark}

Now, we are ready to prove Theorem \ref{quasi-tilted-rationalinv-thm}:

\begin{proof}[Proof of Theorem \ref{quasi-tilted-rationalinv-thm}]  The implications $(1)\Longrightarrow (2) \Longrightarrow (3) \Longrightarrow (4)$ have been proved in Proposition \ref{qtilted-prop}. The implication $(4) \Longrightarrow (1)$ follows from Proposition \ref{singular-moduli-wild-prop}. 
\end{proof}

Finally, let us prove Proposition \ref{ratio-inv-moduli-tame-quasi-prop}:

\begin{proof}[Proof of Proposition \ref{ratio-inv-moduli-tame-quasi-prop}] We know from Theorem \ref{quasi-tilted-rationalinv-thm} that if $C$ is an indecomposable irreducible component of $\module(A,\dd)$ then $S^m(k(C)^{\GL(\dd)})$ is isomorphic to either $k$ in case $\dd$ is a real Schur root or to $k(t_1,\ldots, t_m)$ in case $\dd$ is isotropic. The proof now follows from Proposition \ref{rational-inv-generic-decomp-prop} and Proposition \ref{qtilted-prop}.
\end{proof}

\begin{remark} In view of Happel's work in \cite{Hap}, to prove the implication $``(4) \Longrightarrow (1)''$ of Theorem \ref{quasi-tilted-rationalinv-thm} for quasitilted algebras, one possible venue is to prove first the analogue of Theorem \ref{moduli-tilting-thm} for tilting complexes, and then that of Proposition \ref{singular-moduli-wild-prop}  for wild canonical algebras. We plan to  explore this approach in a sequel to this work.
\end{remark}

\subsection*{Acknowledgment} I am grateful to Otto Kerner for clarifying conversations on wild tilted algebras. During the last stages of this work, the author was partially supported by NSF grant DMS-1101383.

%\bibliography{biblio}\label{biblio-sec}

\end{document}